\definecolor{bgcolor}{rgb}{0.8,1,1}
\definecolor{bgcolor2}{rgb}{0.8,1,0.8}
\newcommand{\myred}[1]{{\color{red}#1}}
\def \R {\mathbb R}
\newtheorem{assumption}{Assumption}
\def\R{\mathbb{R}}
\def\R{\mathbb R}
\def\EE{\mathbb E}
\newcommand{\eqdef}{\vcentcolon=}
\newcommand{\cO}{\mathcal{O}}
\DeclareMathOperator{\dom}{\mathrm{dom}}
\newcommand{\Lavg}{\overline{L}}
\def\<#1,#2>{\langle #1,#2\rangle}
\begin{document} 
\title{Optimal Analysis of Method with Batching for Monotone Stochastic Finite-Sum Variational Inequalities}
\titlerunning{Optimal Method with Batching for Monotone Stochastic VIs}
%
\author{Alexander Pichugin\inst{1}\and
Maksim Pechin\inst{1}\and
Aleksandr Beznosikov\inst{1,2} \and
Alexander Gasnikov\inst{1,2}
}
\authorrunning{A. Pichugin, M. Pechin, A. Beznosikov, A. Gasnikov}
%
\institute{Moscow Institute of Physics and Technology, Moscow, Russia, \and
Institute for Information Transmission Problems, Moscow, Russia
}
\maketitle              
\begin{abstract}
Variational inequalities are a universal optimization paradigm that is interesting in itself, but also incorporates classical minimization and saddle point problems. Modern realities encourage to consider stochastic formulations of optimization problems. In this paper, we present an analysis of a method that gives optimal convergence estimates for monotone stochastic finite-sum variational inequalities. In contrast to the previous works, our method supports batching and does not lose the oracle complexity optimality. The effectiveness of the algorithm, especially in the case of small but not single batches is confirmed experimentally.
\keywords{stochastic optimization \and variational inequalities \and finite-sum problems \and batching}
\end{abstract}
\section{Introduction}
In this paper, we consider the following variational inequality problem: 
\begin{equation}
    \label{eq:VI}
    \text{Find } x^* \in \mathcal{X} \text{ such that } \langle F(x^*), x - x^* \rangle + g(x) - g(x^*) \geq 0, \text{ for all } x \in \mathcal{X},
\end{equation}
where $F$ is some operator, $g$ is a proper convex lower semicontinuous function with domain $\text{dom} g$. Variational inequalities are a popular optimization formulation. This is due to the fact that they incorporate many different problem statements that arise widely in various fields of applied science. To understand the importance of variational inequalities, let us consider two important examples.  

\begin{example}[Minimization]
Consider the classic and widely known convex regularized minimization problem:
\begin{align}
\label{eq:min}
\min_{x \in \R^d} f(x) + g(x),
\end{align}
where $f$ is typically a main target function, and $g$ a regularizer. 
If we put $F(x) := \nabla f(x)$, then one can prove that $x^* \in \dom g$ is a solution for \eqref{eq:VI} if and only if $x^* \in \dom g$ is a solution for \eqref{eq:min} \cite[Section 1.4.1]{VIbook2003}.
\end{example}

\begin{example}[Saddle point]
Consider the convex-concave saddle point problem:
\begin{align}
\label{eq:minmax}
\min_{x_1 \in \R^{d_1}} \max_{x_2 \in \R^{d_2}} f(x_1,x_2) + g_1 (x_1) - g_2(x_2),
\end{align}
where $g_1$ and $g_2$ can also be interpreted as regularizers. 
If we define $F(x) := F(x_1,x_2) := [\nabla_{x_1} f(x_1,x_2),$ $ -\nabla_{x_2} f(x_1,x_2)]$ and $g(x) = g(x_1,x_2) = g_1 (x_1) + g_2(x_2)$, then it can be proved that $x^* \in \dom g$ is a solution for \eqref{eq:VI} if and only if $z^* \in \dom g$ is a solution for \eqref{eq:minmax} \cite[Section 1.4.1]{VIbook2003}. It gives that convex-concave saddle point problems  \eqref{eq:minmax} can be investigated via a reformulation in the form of the variational inequality \eqref{eq:VI}.
\end{example}

While the minimization problems are usually considered separately from the paradigm of variational inequalities, the saddle point problems are very often studied in the generality of variational inequalities. But even excluding minimization problems, variational inequalities themselves \cite{scutari2010convex} and their special case of saddle problems have plenty of real-world applications. First of all, it is important to note the classic examples of economics and game theory, which had their golden age in the middle of the last century \cite{NeumannGameTheory1944,HarkerVIsurvey1990,VIbook2003}. It is also important to note also the classic but newer stories from robust optimization \cite{BenTal2009:book,nesterov2007dual,nemirovski2004prox} and supervised/unsupervised learning \cite{Thorsten,bach2011optimization,NIPS2004_64036755,bach2008convex,esser2010general,chambolle2011first}. And the cherry on top of the enduring popularity of variational inequalities are all-new applications from reinforcement learning \cite{Omidshafiei2017:rl,Jin2020:mdp}, adversarial training \cite{Madry2017:adv}, and generative models \cite{goodfellow2014generative,daskalakis2017training,gidel2018variational,mertikopoulos2018optimistic}. 

Meanwhile, every year modern applied problems become more and more complicated and very often have a stochastic nature: $F(x) = \mathbb{E}_{\xi \sim \mathcal{D}}[F_{\xi}(x)]$.  Often such mathematical expectation cannot be represented in a closed form due to the fact that the distribution $\mathcal{D}$ is non-trivial or even unknown, therefore one often resorts to the Monte Carlo approach, which approximates the expectation integral using a set of samples from the distribution $\mathcal{D}$:
\begin{equation}
    \label{eq:sum}
    F(x) = \frac{1}{M} \sum\limits_{m=1}^M F_m (x).
\end{equation}
In algorithms designed to solve stochastic problems \cite{robbins1951stochastic,johnson2013accelerating,defazio2014saga,nguyen2017sarah}, one often avoids calling the full operators/gradients, and resorts to the batching technique (e.g., we can choose one or more terms from \eqref{eq:sum}). Therefore, it is very important that the algorithm is robust to the use of any size batches, both in terms of theory and in terms of practice. But even for minimization problems, not all stochastic methods satisfy such properties \cite[Section 5]{allen2017katyusha}. This brings us to the objective of this paper:

\begin{quote}
\textit{
Develop an algorithm for solving the monotone stochastic (finite-sum) variational inequality with Lipschitz summands \eqref{eq:VI}+\eqref{eq:sum}. In terms of theory, the algorithm should be non-sensitive to the size of batches.
}
\end{quote}

\textbf{Our contribution and related works.} The theory of solving monotone variational inequalities with Lipschitz operators has an extensive history. A natural idea to get a method is to use a gradient descent with replacement of the gradient $\nabla f(x)$ by the operator $F(x)$. But such a method for monotone operators may diverge. A breakthrough was the creation of the ExtraGradient method \cite{korpelevich1976extragradient}, which was widely theoretically investigated \cite{mokhtari2020unified,nemirovski2004prox} and subsequently received various modifications/analogues \cite{popov1980modification,tseng2000modified}, including the stochastic case \cite{juditsky2011solving}. Regarding the methods for monotone stochastic finite-sum problems, one can highlight papers \cite{carmon2019variance,alacaoglu2021stochastic,Yura2021} where the authors adapt the variance reduction technique to variational inequalities.  Moreover, the results from \cite{alacaoglu2021stochastic} are optimal \cite{han2021lower}. But this is valid only for the case when each iteration we call $1$ term from the sum \eqref{eq:sum}. As soon as we use a non-single-batch, the results from works \cite{carmon2019variance,alacaoglu2021stochastic} become worse. See Table \ref{tab:comparison2} for details.

We solve this issue and propose a method that is optimal for any batch size. Our method is based on the optimistic scheme with momentum from \cite{kovalev2022optimalvi}, but in this paper, the authors studied only strongly monotone variational inequalities, while we consider more general and more complicated from the point of view of theoretical analysis monotone ones. 

\renewcommand{\arraystretch}{2}
\begin{table*}[h]
    \centering
    \small
	\caption{Summary complexities for finding an $\varepsilon$-solution for the monotone stochastic (finite-sum) variational inequality with Lipschitz terms \eqref{eq:VI}+\eqref{eq:sum}. Convergence is measured by the gap function. {\em Notation:} $\mu$ = constant of strong monotonicity of the operator $F$, $L$ = Lipschitz constants for all $F_{i}$, $n$ = the size of the local dataset, $b$ = the batch size per iteration }
    \label{tab:comparison2}    
  \begin{threeparttable}
    \begin{tabular}{|c|c|c|}
    \hline
    \textbf{Reference } & \textbf{Complexity} & \textbf{Additional assumptions} 
    \\\hline
    Nemirovski et al. \cite{nemirovski2004prox}\tnote{{\color{blue}(1)}} & $\mathcal{O} \left( \myred{n} \frac{L}{\varepsilon} \right)$  &\\\hline
    Carmon et al. \cite{carmon2019variance} & $\mathcal{\tilde O} \left( \sqrt{\myred{b}n}\frac{L}{\varepsilon} \right)$ & \makecell{{$x \to \langle F(x) + \nabla g(x), x - u \rangle$ is convex}  \\ {for any $u$ \textbf{or} bounded domain}} \\\hline
    Alacaoglu and Malitsky \cite{alacaoglu2021stochastic} & $\mathcal{O} \left( \sqrt{\myred{b}n}\frac{L}{\varepsilon}\right)$ & \\\hline
    Alacaoglu et al. \cite{Yura2021} & $\mathcal{O} \left( \myred{n}\frac{L}{\varepsilon} \right)$ & 
    \\\hline
    \cellcolor{bgcolor2}{This paper} & \cellcolor{bgcolor2}{$\mathcal{O} \left( \sqrt{n}\frac{L}{\varepsilon} \right)$} & \cellcolor{bgcolor2}{} \\\hline \hline
    Han et al. \cite{han2021lower} & $\Omega \left( \sqrt{n}\frac{L}{\varepsilon}\right)$ & lower bounds
    \\\hline
    \end{tabular}   
    \begin{tablenotes}
    \scriptsize
    \item [{\color{blue}(1)}] deterministic methods, similar results were also obtained in \cite{tseng2000modified,mokhtari2020unified}
\end{tablenotes}    
    \end{threeparttable}
\end{table*}

\section{Problem Setup and Assumptions} 

We consider the problem \eqref{eq:VI}+\eqref{eq:sum}, where $\mathcal{X}$ be a finite dimensional vector space with Euclidean inner product $\langle \cdot, \cdot \rangle$ and induced norm $\| \cdot \|$. For a proper convex lower semicontinuous $g$, we denote domain as $\text{dom} g = \{x: g(x) < + \infty \}$. We also assume that the function $g$ is proximally friendly, i.e. the following operator $\text{prox}_{\alpha g} (x) = \arg\min_{y \in \mathcal{X}} \left\{ \alpha g(y) + \frac{1}{2} \| y - x\|^2\right\}$ with any $\alpha > 0$ can be exactly calculated for free.
For $y = \text{prox}_{\alpha g}(x)$ it holds that 
\begin{align}
x-y\in \partial (\alpha g)(y)\label{prox_property}. 
\end{align}
\\By $\mathbb{E}_{k}$ we denote a conditional expectation 
$\mathbb{E}\left[\cdot | S^{k-1},S^{k-2},\cdot\cdot\cdot,S^{0}\right].$
\\Each operator $F_i: \dom g \to \R^d$ is single-valued. The key assumptions of our paper we give as follows
\begin{assumption}\label{ass}
\hspace{0.cm}
\begin{itemize}
    \item The solution (may be not unique) for the problem \eqref{eq:VI}+\eqref{eq:sum} exists.
    \item The operator $F$ is monotone, i.e. for all $u, v \in \R^d$ we have
\begin{equation*}
\langle F(u) - F(v); u - v \rangle \geq 0.
\end{equation*}
    \item The operator $F$ is $L$-Lipschitz, i.e. for all $u, v \in \R^d$ we have
\begin{equation*}
\| F(u) - F(v) \| \leq L \| x - y \|.
\end{equation*}
\item Each operator $F_m$ is $L_m$-Lipschitz. We define $\bar L$ as $\bar L^2 = \frac{1}{M}\sum\limits_{m=1}^M L_m^2$.
\end{itemize}
\end{assumption}

\section{Main Part}

We base our method on the non-distributed version of Algorithm 1 from \cite{kovalev2022optimalvi}. This algorithm is based on the so-called optimistic modification \cite{popov1980modification} of the ExtraGradient method \cite{korpelevich1976extragradient}. The essence of this modification is to use the value of the operator not only at the points of the current iteration ($x^k$ and $w^k$), but also from the past iteration ($x^{k-1}$ and $w^{k-1}$). We also use the variance reduction technique \cite{johnson2013accelerating,alacaoglu2021stochastic}, for this we introduce an additional sequence $w^k$. The point $w^k$ (the reference point) changes rarely (if $p$ is small), and hence the full operator $F(w^k)$ is almost not recalculated. We also apply the random negative momentum $\gamma (w^k - x^k)$, which is necessary to use the variance reduction approach in methods for variational inequalities \cite{alacaoglu2021stochastic,Yura2021}. 

\begin{algorithm}[h]
	\caption{Optimistic Method with Momentum and Batching}
	\label{alg:sarah}
	\begin{algorithmic}[1]
\State
\noindent {\bf Parameters:}  stepsize $\eta>0$, momentum $\gamma > 0$, probability $p \in (0;1)$, batchsize $b \in \{1,\ldots,M\}$, number of iterations $K$\\
\noindent {\bf Initialization:} choose  $x^0 = w^0 = x^{-1} = w^{-1} \in \R^d$ \label{Alg1Line2}
\For {$k=0, 1, \ldots, $ }
    \State Sample $j_1^k,\ldots, j_b^k$ independently from $\{1,...,M\}$ uniformly at random
    \State $S^k=\{j_1^k,..., j_b^k\}$
    \State $\Delta^k = \frac{1}{b}\sum\limits_{j\in S^k}(F_j(x^k)-F_j(w^{k-1})+(F_j(x^k)-F_j(x^{k-1})))+F(w^{k-1})$ \label{Alg1Line6}
    \State $x^{k+1}=\text{prox}_{\eta g}(x^k+\gamma (w^k-x^k)-\eta \Delta^k)$ \label{Alg1Line7}
    \State $w^{k+1}=\begin{cases}
    x^{k+1},\; \text{with probability } p\\
    w^k,\; \text{with probability } 1-p
\end{cases}$\label{Alg1Line8}
\EndFor
	\end{algorithmic}
\end{algorithm}

We use the gap function as convergence criterion:
\begin{equation}
    \label{gap}
    \text{Gap} (z) \eqdef \sup_{u \in \mathcal{C}} \left[ \langle F(u),  z - u  \rangle + g(x) - g(u)\right].
\end{equation}
Here we do not take the maximum over the entire set $\mathcal{X}$, but over $\mathcal{C}$ -- a compact subset of $\mathcal{X}$. Thus, we can also consider unbounded sets $\mathcal{X} \subseteq \R^d$. This is permissible, since such a version of the criterion is valid if the solution $x^{*}$ lies in $\mathcal{C}$; for details see \cite{nesterov2007dual}.
The following theorem gives the convergence of the proposed method. 

\begin{theorem}\label{th:ALg1}
Consider the problem \eqref{eq:VI}+\eqref{eq:sum} under Assumptions~\ref{ass}. Let  $\{x^k\}$ be the sequence generated by Algorithm~\ref{alg:sarah} with tuning of $\eta, \theta, \alpha, \beta, \gamma$  as follows:
$$ 0 < p=\gamma \leq \frac{1}{16}, \quad \eta = \min\left\{\frac{\sqrt{\gamma b}}{8\bar L}, \frac{1}{8L}\right\}$$
Then, given $\varepsilon>0$, the number of iterations for $\EE[\text{Gap}(x^k)] \leq \varepsilon$ is 
\begin{equation*}
     \cO\left( \frac{1}{\sqrt{pb}}\frac{\bar L}{\varepsilon}+  \frac{L}{\varepsilon} \right).
\end{equation*}
\end{theorem}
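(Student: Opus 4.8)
The plan is to run a single-iteration potential (Lyapunov) argument driven by the prox optimality condition \eqref{prox_property}, and then telescope and average. First I would record the two stochastic facts about the estimator built in Line~\ref{Alg1Line6}. Writing $\zeta_j \eqdef (F_j(x^k)-F_j(w^{k-1}))+(F_j(x^k)-F_j(x^{k-1}))$, uniform sampling gives unbiasedness of the optimistic direction, $\Ek{\Delta^k}=2F(x^k)-F(x^{k-1})$, and, since the $b$ indices are i.i.d., a variance bound $\Ek{\sqn{\Delta^k-\Ek{\Delta^k}}}\le \frac{1}{b}\EE_j\sqn{\zeta_j}\le \frac{2\bar L^2}{b}\left(\sqn{x^k-w^{k-1}}+\sqn{x^k-x^{k-1}}\right)$, using $L_m$-Lipschitzness of each $F_m$ and the definition of $\bar L$.

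Second I would turn the prox step (Line~\ref{Alg1Line7}) into an inequality. From \eqref{prox_property}, for every $u\in\C$, the three-point identity yields $\eta\langle\Delta^k,x^{k+1}-u\rangle+\eta(g(x^{k+1})-g(u))\le \frac12\sqn{x^k-u}-\frac12\sqn{x^{k+1}-u}-\frac12\sqn{x^{k+1}-x^k}+\gamma\langle w^k-x^k,x^{k+1}-u\rangle$. Then I would split $\Delta^k=\Ek{\Delta^k}+(\Delta^k-\Ek{\Delta^k})$: the noise part is treated by $\langle\Delta^k-\Ek{\Delta^k},x^{k+1}-u\rangle=\langle\Delta^k-\Ek{\Delta^k},x^k-u\rangle+\langle\Delta^k-\Ek{\Delta^k},x^{k+1}-x^k\rangle$, whose first summand vanishes under $\Ek{\cdot}$ (since $x^k,u$ are measurable) and whose second summand is bounded by Young's inequality against a fraction of the negative term $-\frac12\sqn{x^{k+1}-x^k}$, leaving a residual controlled by the variance bound above.

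Third, the optimistic mean direction is handled by the standard extrapolation manipulation: writing $e_k\eqdef F(x^k)-F(x^{k-1})$, I would use $\langle 2F(x^k)-F(x^{k-1}),x^{k+1}-u\rangle=\langle F(x^{k+1}),x^{k+1}-u\rangle+\langle e_k,x^k-u\rangle-\langle e_{k+1},x^{k+1}-u\rangle+\langle e_k,x^{k+1}-x^k\rangle$, then apply monotonicity $\langle F(x^{k+1})-F(u),x^{k+1}-u\rangle\ge0$ to extract the gap ingredient, recognize $\langle e_k,x^k-u\rangle-\langle e_{k+1},x^{k+1}-u\rangle$ as telescoping, and bound $\langle e_k,x^{k+1}-x^k\rangle$ by $L$-Lipschitzness and Young. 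The negative momentum term $\gamma\langle w^k-x^k,x^{k+1}-u\rangle$ is expanded similarly. Collecting everything and taking $\Ek{\cdot}$ yields a one-step estimate $\eta\,\Ek{\langle F(u),x^{k+1}-u\rangle+g(x^{k+1})-g(u)}+\Ek{\Psi^{k+1}}\le \Psi^k$, where the potential $\Psi^k$ combines $\frac12\sqn{x^k-u}$, the telescoped optimistic term, and weighted squared distances $\sqn{x^k-x^{k-1}}$ and $\sqn{x^k-w^{k-1}}$.

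The decisive step is controlling the two variance distances. Here I would use the randomized reference-point update (Line~\ref{Alg1Line8}): since $w^{k+1}=x^{k+1}$ with probability $p$ and $w^{k+1}=w^k$ otherwise, $\Ek{\sqn{w^{k+1}-x^*}}=p\sqn{x^{k+1}-x^*}+(1-p)\sqn{w^k-x^*}$, and analogous relations let $\sqn{x^k-w^{k-1}}$ be absorbed into the potential at the cost of a factor $p$. The whole scheme closes only if the variance contribution of order $\frac{\eta^2\bar L^2}{b}$ is dominated by the negative terms produced by the telescoping and the $p$-weighted $w$-dynamics; this is exactly what forces $\eta\lesssim \sqrt{\gamma b}/\bar L$ together with $\eta\lesssim 1/L$ and $p=\gamma\le \tfrac1{16}$. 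I expect this balancing of constants — choosing the Young parameters and the potential weights so that the coefficients of $\sqn{x^{k+1}-x^k}$, $\sqn{x^k-w^{k-1}}$ and $\sqn{x^k-x^{k-1}}$ are all net nonpositive — to be the main obstacle, and more delicate than in the strongly monotone analysis of \cite{kovalev2022optimalvi}, since there is no $\mu$-term to supply contraction. Finally I would sum over $k=0,\dots,N-1$, telescope the potential and the optimistic term (bounding the boundary contribution via Lipschitzness), take total expectation, and invoke convexity of $\mathrm{Gap}$ to pass to the ergodic average, obtaining $\E{\mathrm{Gap}(\bar x^N)}\le \frac{C}{\eta N}$; substituting the prescribed $\eta$ and $p=\gamma$ gives the announced $\cO\!\left(\frac{1}{\sqrt{pb}}\frac{\bar L}{\varepsilon}+\frac{L}{\varepsilon}\right)$ iteration count.
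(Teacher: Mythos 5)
There is a genuine gap, and it sits exactly at the point where the monotone (gap-function) analysis differs from the strongly monotone one. Your one-step estimate is derived for a \emph{fixed} comparison point $u$: you kill the noise term $\langle \Delta^k-\Ek{\Delta^k},\, x^k-u\rangle$ by taking $\Ek{\cdot}$ ``since $x^k,u$ are measurable'', and you handle the reference-point dynamics through identities of the form $\Ek{\|w^{k+1}-x^*\|^2}=p\|x^{k+1}-x^*\|^2+(1-p)\|w^k-x^*\|^2$ anchored at the fixed point $x^*$. Summing, telescoping and taking total expectation in this way yields, for every fixed $u\in\C$, a bound $\mathbb{E}\left[\langle F(u),\bar x^K-u\rangle+g(\bar x^K)-g(u)\right]\le \frac{C(u)}{\eta K}$, i.e.\ a bound on $\sup_{u\in\C}\mathbb{E}[\cdot]$. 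But the theorem requires $\mathbb{E}[\text{Gap}(\bar x^K)]=\mathbb{E}\left[\sup_{u\in\C}(\cdot)\right]$, and $\mathbb{E}[\sup_u]\ge\sup_u\mathbb{E}$: the maximizing $u$ is a random point depending on the \emph{entire} trajectory, hence it is not $\mathcal{F}_k$-measurable for $k<K$, and the conditional-expectation argument that makes the noise term vanish (and that makes the $w$-update terms a martingale difference) is simply not available for it. As written, your proposal proves the weaker ``$\sup$-of-expectation'' criterion, not the stated expectation of the gap.

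The paper closes exactly this hole with Lemma~\ref{lem:1} (Lemma 2.4 of Alacaoglu--Malitsky): for a martingale-difference sequence $(u^{k+1})$ one has $\mathbb{E}\left[\max_{x\in\C}\sum_{k}\langle u^{k+1},x\rangle\right]\le \max_{x\in\C}\frac12\|x^0-x\|^2+\frac12\sum_k\mathbb{E}\|u^{k+1}\|^2$. This is applied twice: directly to the terms $\langle\Ek{\Delta^k}-\Delta^k,\, x^k-x\rangle$, which are kept under the maximum rather than discarded, at the price of extra variance $\eta^2\mathbb{E}\|\Delta^k-\Ek{\Delta^k}\|^2$; and, in Lemma~\ref{lem:4}, to $u^{k+1}=\gamma x^{k+1}+(1-\gamma)w^k-w^{k+1}$, so as to control $\max_{x\in\C}\sum_k\left[\|w^{k+1}-x\|^2-\gamma\|x^{k+1}-x\|^2-(1-\gamma)\|w^k-x\|^2\right]$ at the price of $\frac{\gamma(1-\gamma)}{2}\mathbb{E}\|x^{k+1}-w^k\|^2$. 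These two variance prices are then absorbed by the negative terms $-\gamma\|w^k-x^{k+1}\|^2$ and $-(1-\gamma)\|x^{k+1}-x^k\|^2$ coming from the prox expansion, and this absorption is precisely where $\eta\le\sqrt{\gamma b}/(8\bar L)$, $\eta\le 1/(8L)$ and $p=\gamma\le 1/16$ are used. The rest of your outline (variance lemma for $\Delta^k$, three-point prox inequality, optimistic telescoping with Young balancing, monotonicity plus Jensen for the ergodic average) matches the paper's proof; what is missing is the device that lets the supremum over $\C$ pass inside the expectation. Without Lemma~\ref{lem:1} or an equivalent ghost-iterate argument, the scheme you describe does not establish the theorem.
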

See the proof in Appendix \ref{app_proof1}. Theorem gives iteration complexity of Algorithm \ref{alg:sarah}, but it is primarily the oracle complexity (the number of calls of terms $F_i$) that is of interest. Note that Theorem gives iterationionic complexity of Algorithm 1, but it is primarily the main point of interest is the oracle complexity (the number of calls to terms $F_i$). One can note that at each iteration we call $\mathcal{O}\left(b + pn\right)$ terms in average/expectation  -- each time we call the batch with size of $b$ and with probability $p$ we compute the full operator. From where we immediately get the optimal choice for $p$:
\begin{corollary}
Under the conditions of Theorem \ref{th:ALg1}, if we choose $p = \frac{b}{n}$, then we get the following oracle  complexities
\begin{equation*}
     \cO\left( \sqrt{n}\frac{\bar L}{\varepsilon}+  b \frac{L}{\varepsilon} \right).
\end{equation*}
\end{corollary}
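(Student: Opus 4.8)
The plan is to establish the iteration bound of Theorem~\ref{th:ALg1} by a Lyapunov (potential-function) argument applied to the \emph{ergodic average} of the iterates, and then to obtain the Corollary simply by multiplying this iteration count by the expected per-iteration oracle cost. First I would record the one-step consequence of the prox step in Line~\ref{Alg1Line7}. Writing $x^{k+1}=\prox_{\eta g}(x^k+\gamma(w^k-x^k)-\eta\Delta^k)$ and using the subgradient inequality implied by \eqref{prox_property}, I get, for every fixed $u\in\C$,
\[
\eta\big(\angles{\Delta^k,\,x^{k+1}-u}+g(x^{k+1})-g(u)\big)\le\angles{x^k+\gamma(w^k-x^k)-x^{k+1},\,x^{k+1}-u}.
\]
The $\angles{x^k-x^{k+1},\,x^{k+1}-u}$ part of the right-hand side I would rewrite with the three-point identity $\angles{a-b,\,b-c}=\tfrac12(\sqn{a-c}-\sqn{b-c}-\sqn{a-b})$, producing the telescoping $\sqn{x^k-u}-\sqn{x^{k+1}-u}$ together with a spare negative term $-\sqn{x^{k+1}-x^k}$; the momentum part $\gamma\angles{w^k-x^k,\,x^{k+1}-u}$ is kept for the reference-point bookkeeping below.

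The second step is to replace the stochastic estimator by the true operator. A short computation gives $\Ek{\Delta^k}=2F(x^k)-F(x^{k-1})$, so $\Delta^k$ is an unbiased estimate of the optimistic (Popov) extrapolation, while averaging $b$ independent samples yields the variance bound $\Ek{\sqn{\Delta^k-\Ek{\Delta^k}}}\lesssim\tfrac{\bar{L}^2}{b}\big(\sqn{x^k-w^{k-1}}+\sqn{x^k-x^{k-1}}\big)$, where $\bar{L}^2=\tfrac1M\sum_m L_m^2$ enters through the per-summand Lipschitz constants. I would split $x^{k+1}-u=(x^{k+1}-x^k)+(x^k-u)$: the term $\angles{\Delta^k-\Ek{\Delta^k},\,x^k-u}$ vanishes in conditional expectation, whereas $\angles{\Delta^k-\Ek{\Delta^k},\,x^{k+1}-x^k}$ is absorbed by Young's inequality against the spare $-\sqn{x^{k+1}-x^k}$. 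For the mean part, the decomposition $2F(x^k)-F(x^{k-1})=F(x^k)+(F(x^k)-F(x^{k-1}))$ together with monotonicity in the form $\angles{F(x^k),\,x^k-u}\ge\angles{F(u),\,x^k-u}$ converts the iterate-operator terms into the gap integrand $\angles{F(u),\,x^k-u}+g(x^k)-g(u)$, leaving Lipschitz cross-terms $\angles{F(x^k)-F(x^{k-1}),\cdot}$ to be tamed by $L$ and Young's inequality.

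The core of the proof, and the step I expect to be the main obstacle, is assembling a potential $\Psi^k$ built from $\sqn{x^k-u}$, the displacement $\sqn{x^k-x^{k-1}}$, and the reference-point terms $\sqn{w^k-x^k}$ weighted by $\gamma/p$, for which the variance-reduction identity $\Ek{\sqn{w^{k+1}-y}}=p\sqn{x^{k+1}-y}+(1-p)\sqn{w^k-y}$ is the right tool. Since the operator is only monotone (and not strongly monotone as in \cite{kovalev2022optimalvi}), $\Psi^k$ cannot contract geometrically; it must instead telescope, with every positive error dominated by a negative term. The delicate point is the joint calibration of $\eta$, $\gamma=p$ and $b$: the rule $\eta\le\frac{\sqrt{\gamma b}}{8\bar{L}}$ is precisely what forces $\tfrac{\eta^2\bar{L}^2}{b}\le\tfrac{\gamma}{64}$, so that the batch-reduced variance is swallowed by the $\gamma$-weighted reference-point terms \emph{uniformly in the batch size} $b$ -- this is exactly the mechanism that keeps the method optimal for all $b$ -- while $\eta\le\frac1{8L}$ controls the deterministic Lipschitz cross-terms. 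Checking that all residual quadratics are non-positive under these constants is the technical crux.

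Finally I would sum the resulting one-step inequality over $k=0,\dots,K-1$, take total expectation, drop the nonnegative terminal potential, and use convexity of $g$ to lower-bound the averaged integrand by its value at the ergodic average $\bar x^K$; taking the supremum over $u\in\C$ (compact, with diameter $D$) gives $\EE[\text{Gap}(\bar x^K)]\le\frac{D^2}{\eta K}$ up to constants. Since $\tfrac1\eta=\cO\big(\tfrac{\bar{L}}{\sqrt{pb}}+L\big)$ with $\gamma=p$, setting this to $\varepsilon$ reproduces the claimed $\cO\big(\tfrac{1}{\sqrt{pb}}\tfrac{\bar{L}}{\varepsilon}+\tfrac{L}{\varepsilon}\big)$ iterations. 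The Corollary is then immediate: each iteration costs $\cO(b+pn)$ oracle calls in expectation (a fresh batch of size $b$ for the stochastic evaluations in Line~\ref{Alg1Line6}, plus a full pass of cost $n$ only when $w$ is refreshed, which occurs with probability $p$), so multiplying by the iteration count and substituting $p=b/n$ -- whence $\tfrac{1}{\sqrt{pb}}=\tfrac{\sqrt n}{b}$ and $b+pn=\cO(b)$ -- yields $\cO\big(\sqrt n\,\tfrac{\bar{L}}{\varepsilon}+b\,\tfrac{L}{\varepsilon}\big)$.
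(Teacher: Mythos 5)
Your proof of the Corollary is correct and matches the paper's own argument exactly: the expected per-iteration oracle cost is $\cO(b+pn)$, and multiplying the iteration bound of Theorem~\ref{th:ALg1} by this cost with $p=b/n$ (so that $\tfrac{1}{\sqrt{pb}}=\tfrac{\sqrt n}{b}$ and $b+pn=\cO(b)$) gives the stated complexity. The lengthy re-derivation of Theorem~\ref{th:ALg1} itself, while faithful to the paper's Lyapunov/telescoping proof in Appendix~\ref{app_proof1}, is not needed since the corollary may simply invoke the theorem.
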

With $b \leq \frac{\Lavg \sqrt{n} }{L}$, we get $\cO\left( \sqrt{n}\frac{\bar L}{\varepsilon}\right)$. And this results is optimal and unimproveable \cite{han2021lower}.

\section{Experiments}

In this section, we aim to test the performance of Algorithm~\ref{alg:sarah} in practice. 

We consider the bilinear problem:
\begin{align}
    \label{bilinear}
     \textstyle  \min\limits_{x\in \triangle^d} \max\limits_{y\in \triangle^d}  x^\top A y,
\end{align}
where $\triangle^d$ is the unit simplex in $\R^d$. We use the same experimental setup as in \cite{alacaoglu2021stochastic}, in particular we consider the
policeman and burglar matrix from \cite{nemirovski2013mini} and the first test matrix from \cite{nemirovski2009robust}. 
Note that the problem \eqref{bilinear} does not have the finite-sum form as \eqref{eq:VI}+\eqref{eq:sum}, but we can rewrite $A$ from \eqref{bilinear} as follows $A = \sum_{i=1}^d A_{i:}$ or $A = \sum_{i=1}^d A_{:i}$, where $A_{i:}$ is the $i$th row of $A$ and $A_{:i}$ is the $i$th column of $A$ -- see details in Section 5.1.2 from \cite{alacaoglu2021stochastic}.

For comparison, we take methods from Table \ref{tab:comparison2}. In particular, we choose Algorithms 1 and 2 from \cite{alacaoglu2021stochastic}, Algorithm 1+2 from \cite{carmon2019variance}.
All algorithms are considered in the Euclidean setting with the projection to simplex from \cite{condat2016fast}.

The parameters of all methods are tuned for the best convergence among the theoretical possible -- see Section 6 from \cite{alacaoglu2021stochastic}. We run all methods with different batch sizes. We use
duality gap \eqref{gap} as the convergence measure, it can be simply computed as $\left[\max_i (A^T x)_i - \min_j (Ay)_j\right]$ for simplex constraints. The comparison criterion is the number of operations (one operation is computationally equal to calculations of $Ay$ and $A^T x$). 

The results are reflected in Figures \ref{fig:comp1} and \ref{fig:comp2}. They show that Algorithm \ref{alg:sarah} outperforms all competitors. 

\begin{figure}[h!]
\centering
\caption{Comparison of computational complexities for Algorithm~\ref{alg:sarah}, \texttt{EG-Mal22-1} (Algorithm 1 \cite{alacaoglu2021stochastic}), \texttt{EG-Mal22-2} (Algorithm 2 \cite{alacaoglu2021stochastic}), \texttt{EG-Car19} (Algorithm 1+2 from \cite{carmon2019variance}) on \eqref{bilinear} with policeman and burglar matrix from \cite{nemirovski2013mini}.}
\label{fig:comp1}
\begin{minipage}[][][b]{\textwidth}
\centering
\includegraphics[width=0.35\textwidth]{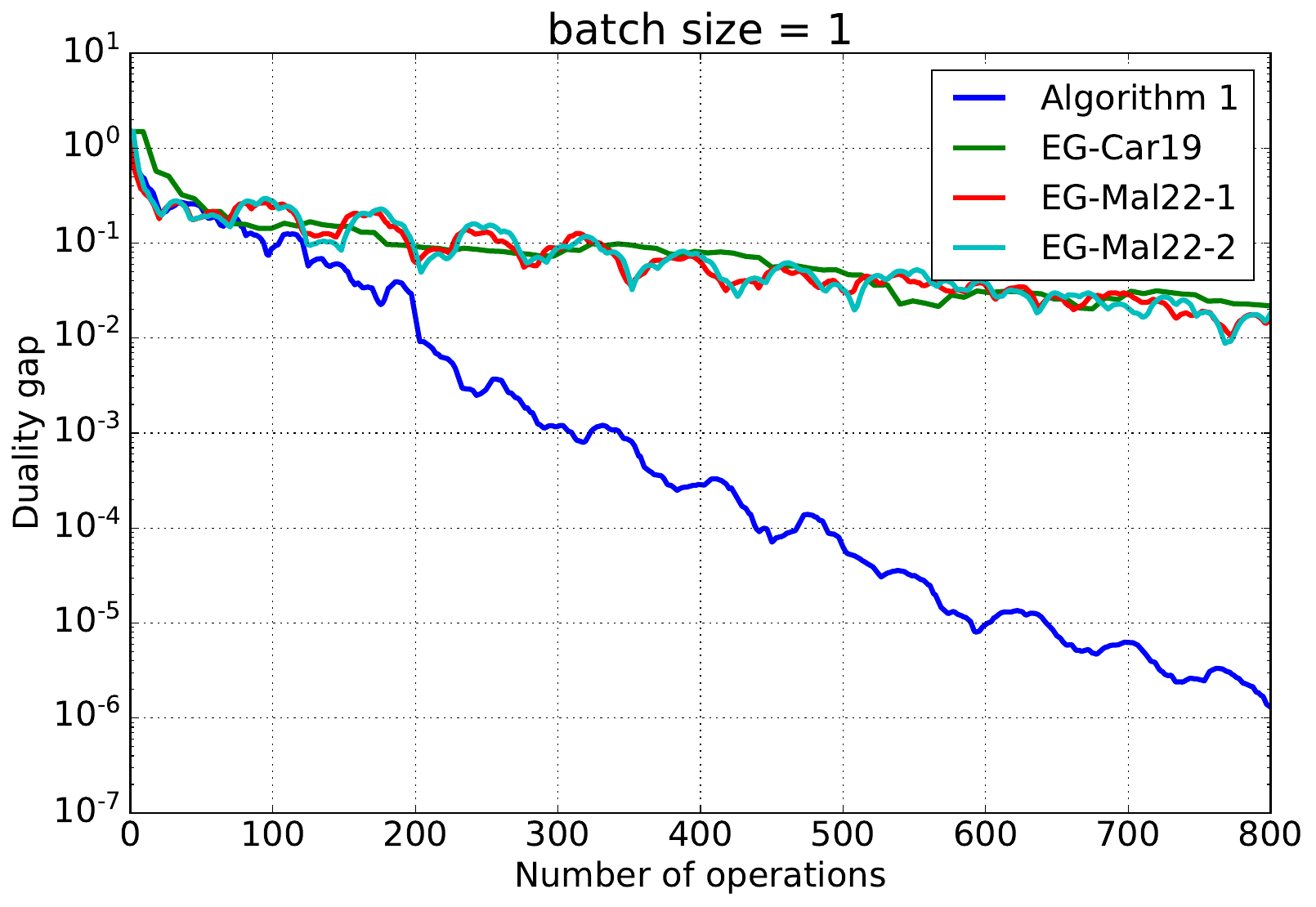}
\includegraphics[width=0.35\textwidth]{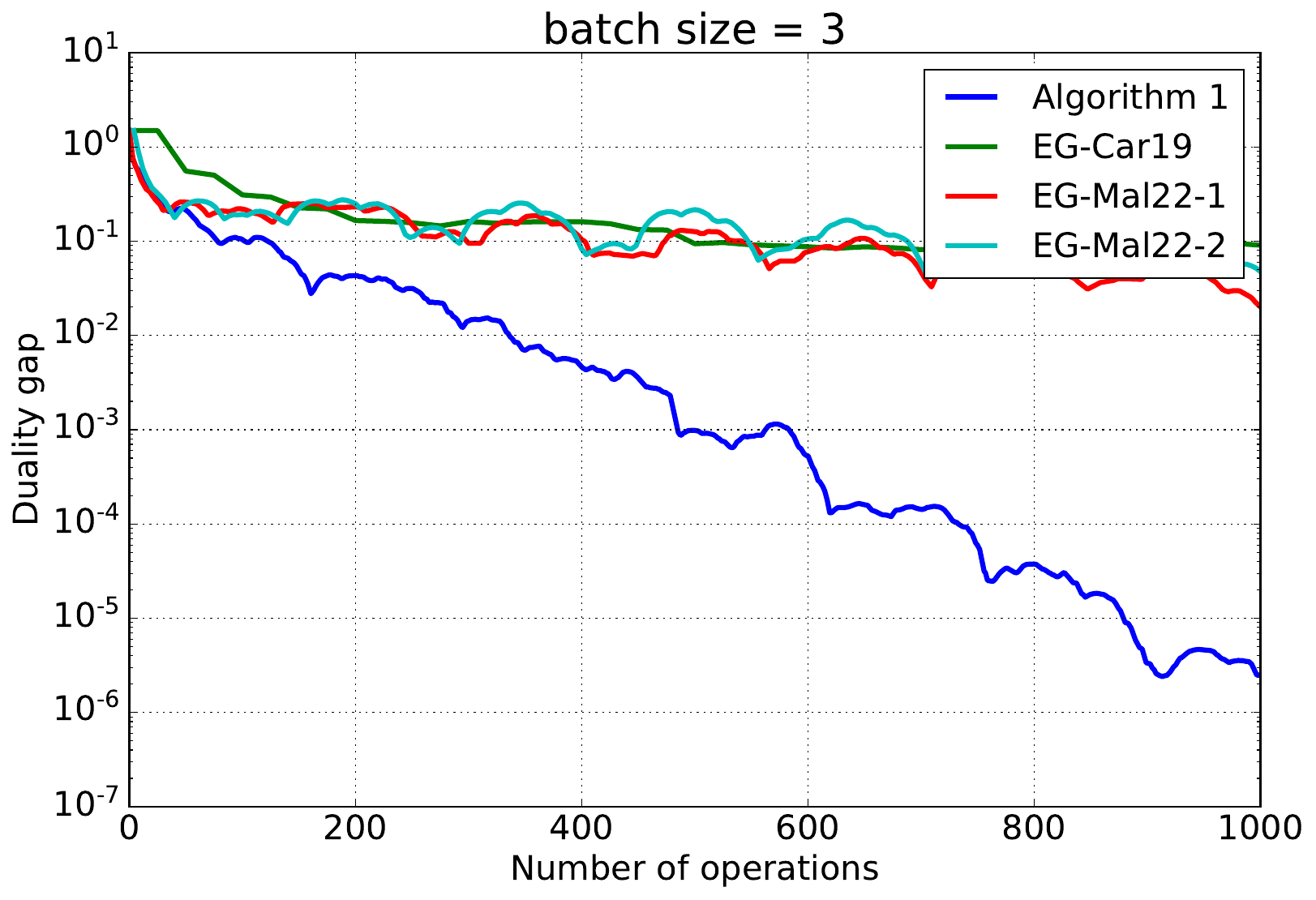}
\includegraphics[width=0.35\textwidth]{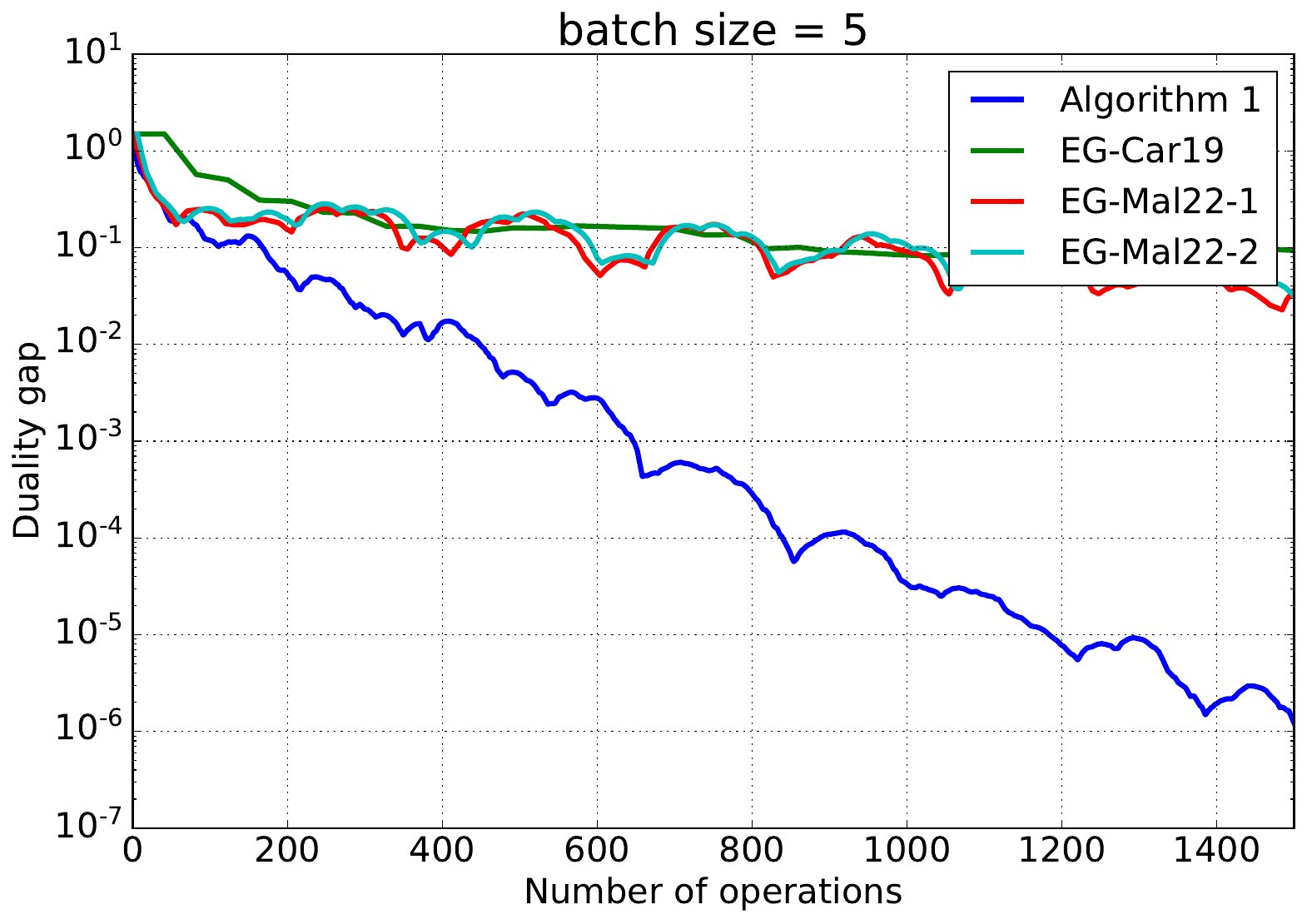}
\includegraphics[width=0.35\textwidth]{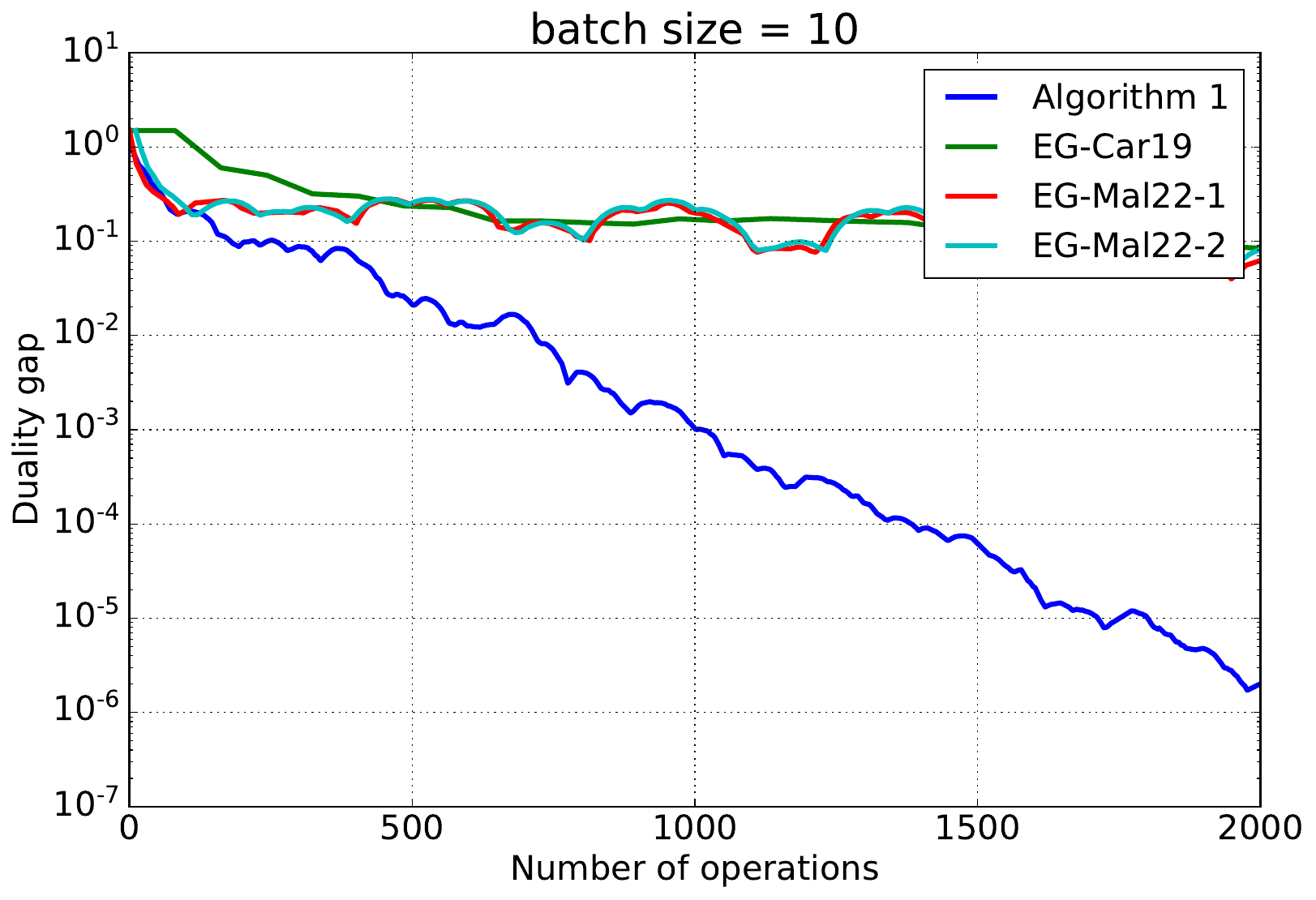}
\end{minipage}
\end{figure}

\begin{figure}[h!]
\centering
\caption{Comparison of computational complexities for Algorithm~\ref{alg:sarah}, \texttt{EG-Mal22-1} (Algorithm 1 \cite{alacaoglu2021stochastic}), \texttt{EG-Mal22-2} (Algorithm 2 \cite{alacaoglu2021stochastic}), \texttt{EG-Car19} (Algorithm 1+2 from \cite{carmon2019variance}) on \eqref{bilinear} with policeman and burglar matrix from \cite{nemirovski2013mini}.}
\label{fig:comp2}
\begin{minipage}[][][b]{\textwidth}
\centering
\includegraphics[width=0.35\textwidth]{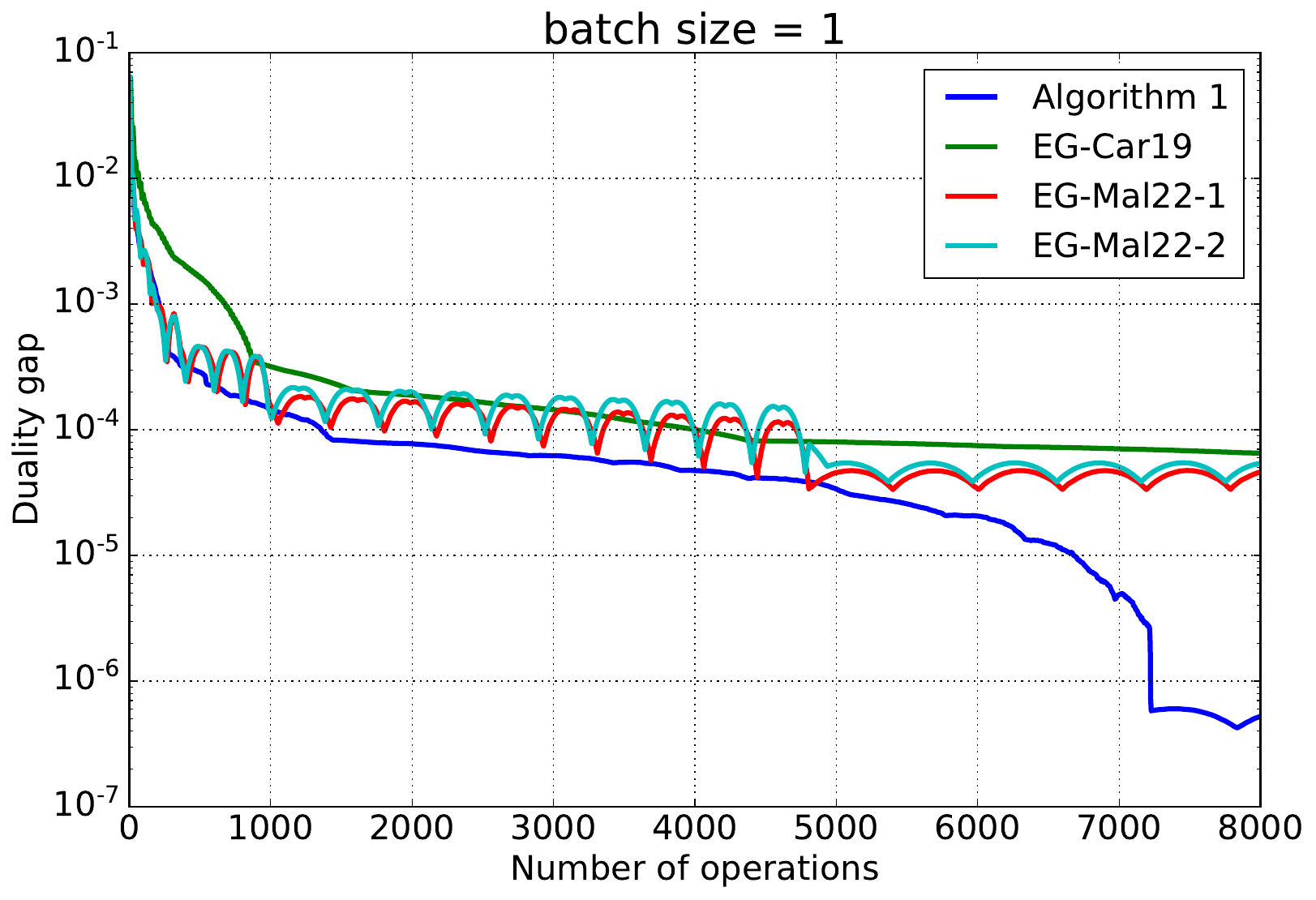}
\includegraphics[width=0.35\textwidth]{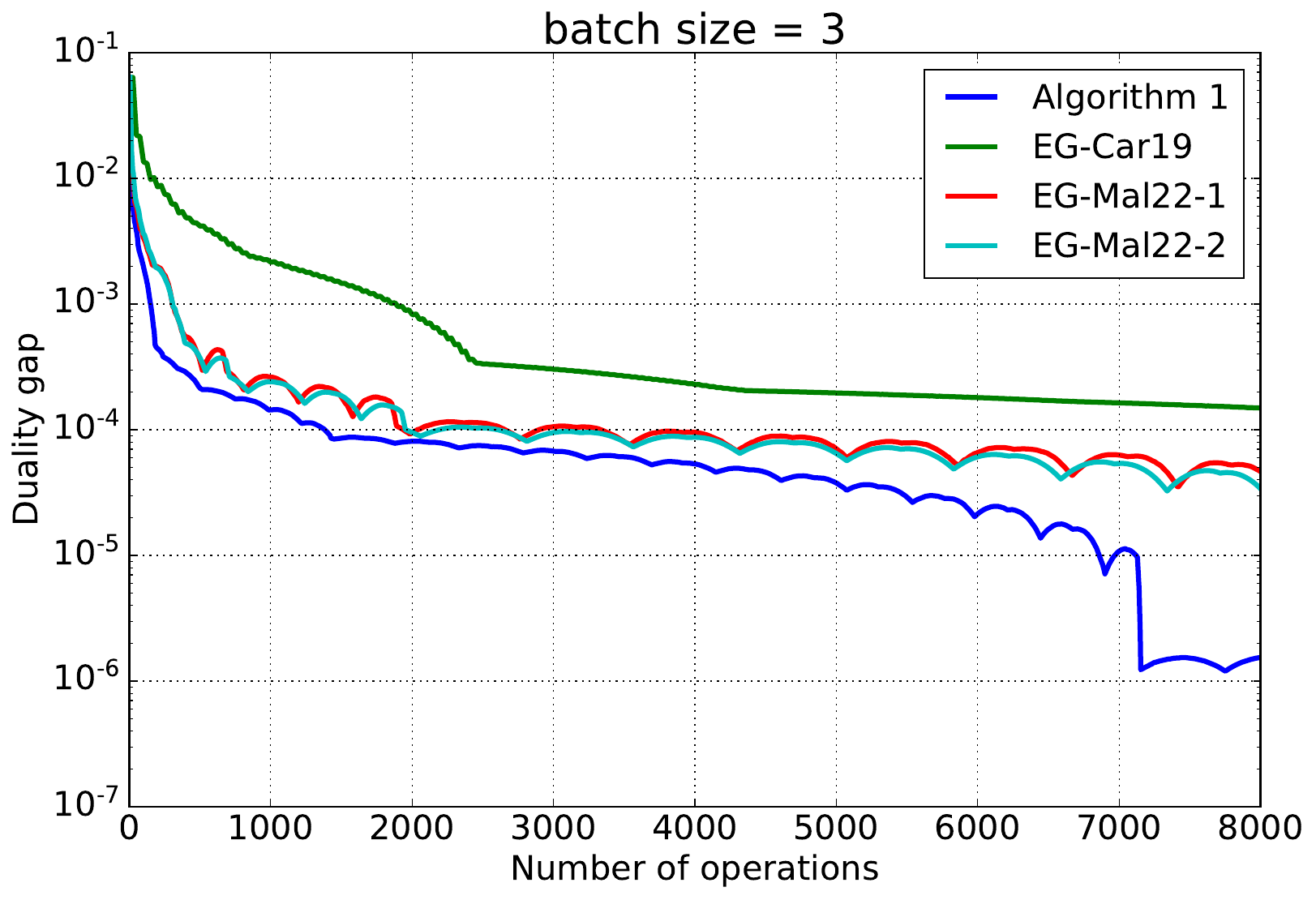}
\includegraphics[width=0.35\textwidth]{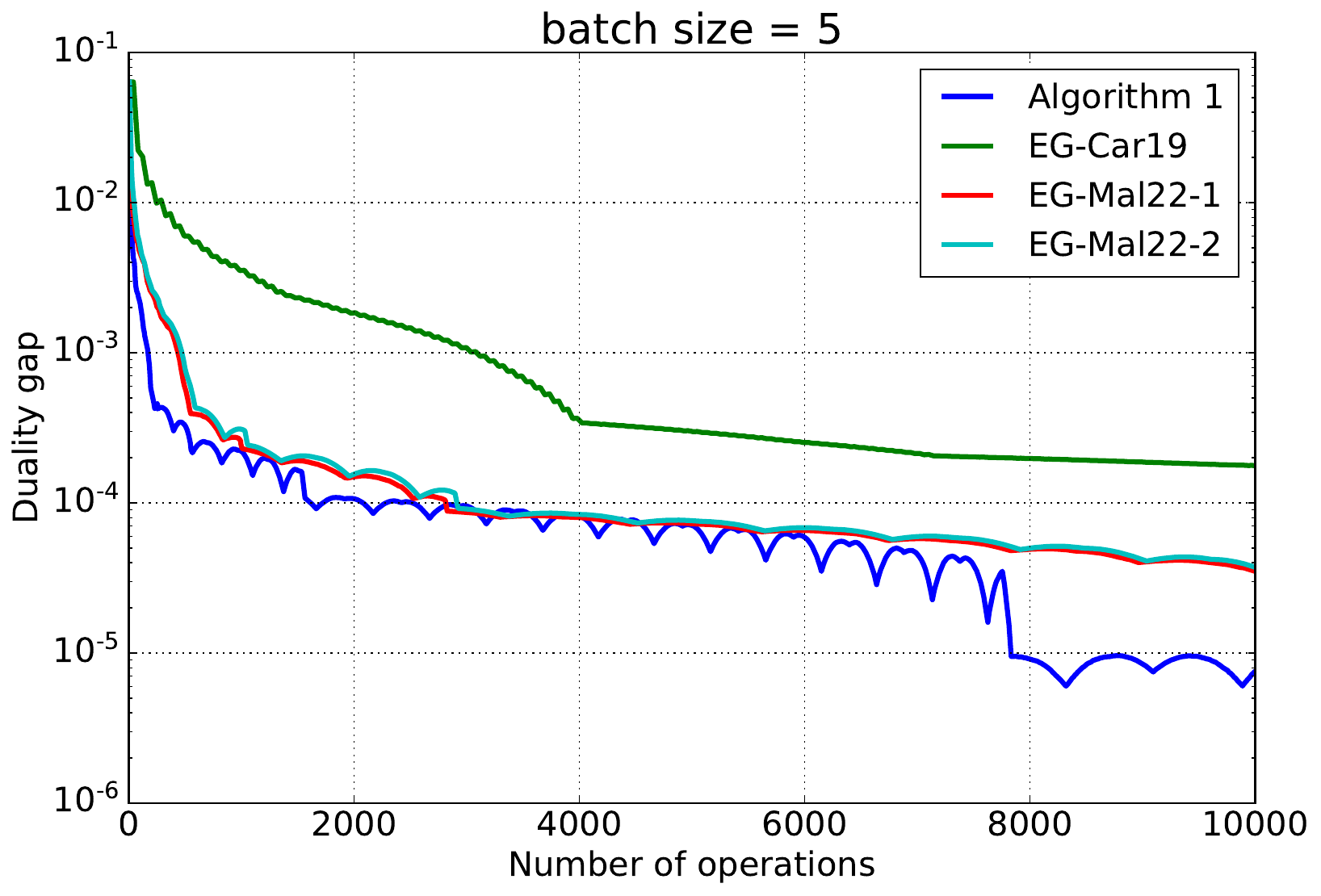}
\includegraphics[width=0.35\textwidth]{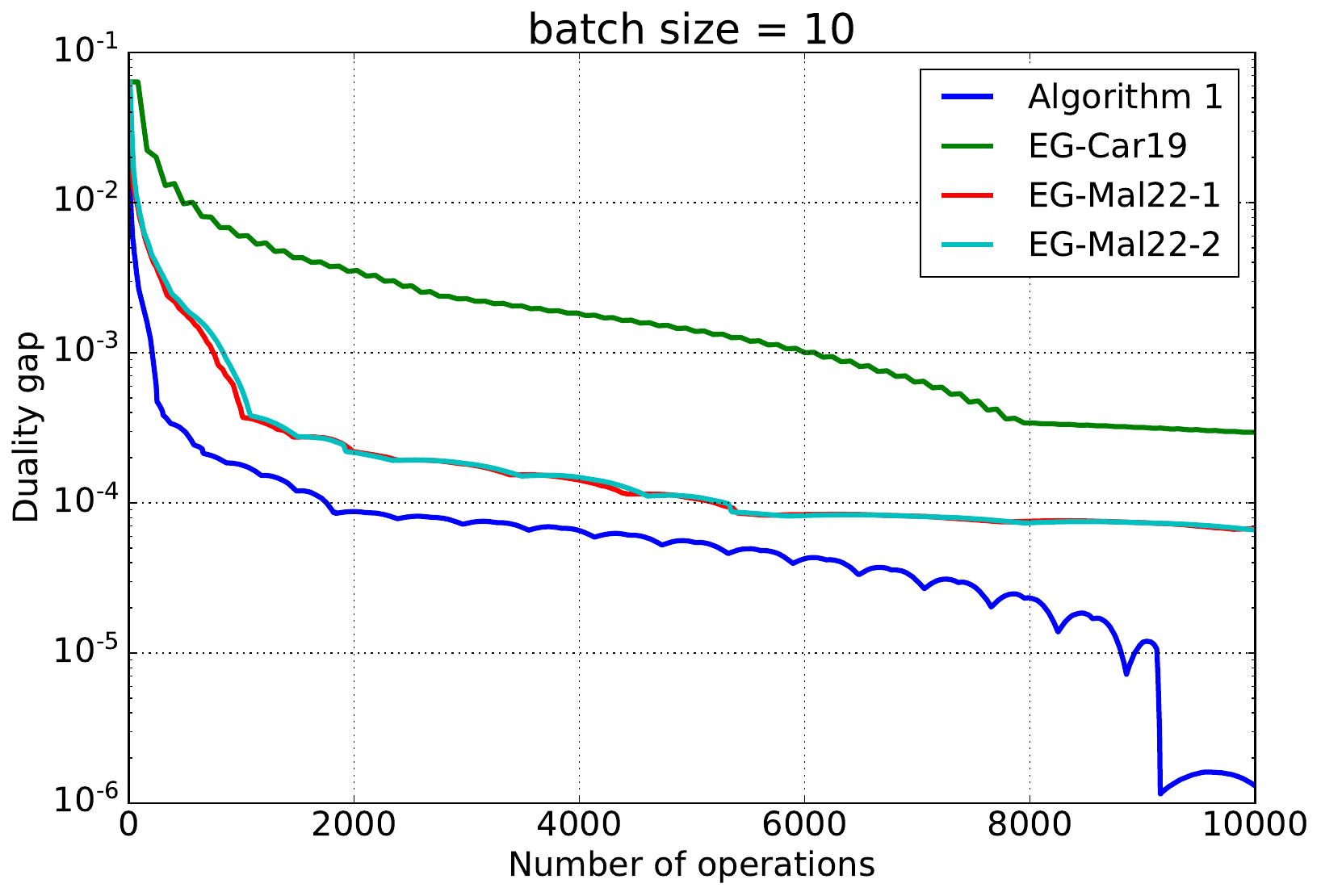}
\end{minipage}
\end{figure}

\section{Conclusion}

We presented an algorithm for solving monotone stochastic (finite-sum) variational inequalities with Lipschitz terms. From the theoretical perspective, the algorithm is optimal for almost any batches size. As directions for future work, it would be interesting to get a version of Algorithm \ref{alg:sarah} in the non-Euclidean case with arbitrary Bregman divergence.

\section*{Acknowledgements}

The work of A. Pichugin and M. Pechin  was supported by a grant for research centers in the field of artificial intelligence, provided by the Analytical Center for the Government of the Russian Federation in accordance with the subsidy agreement (agreement identifier 000000D730321P5Q0002) and the agreement with the Moscow Institute of Physics and Technology dated November 1, 2021 No. 70-2021-00138.

\bibliographystyle{splncs04}
\bibliography{ltr}

\newpage 

\appendix

\section{Proof of Theorem \ref{th:ALg1}}\label{app_proof1}
Before proving the theorem we introduce the following Lemmas.
\begin{lemma}\label{lem:1}[Lemma 2.4 from \cite{alacaoglu2021stochastic}]
    Let $\mathcal{F} = (\mathcal{F}_k)_{k\geq 0}$ be a filtration and $(u^k)$ a stochastic process adapted to $\mathcal{F}$ with $\mathbb{E}[u^{k+1}|F_k]=0$. Then for any $K\in \mathbb{N}$, $x^0\in X$ and any compact set $\mathcal{C}\subseteq X$
    \begin{align*}
        \mathbb{E}\left[\max\limits_{x\in \mathcal{C}}\sum\limits_{k=0}^{K-1}\langle u^{k+1}, x \rangle\right] \leq \max\limits_{x\in \mathcal{C}}\frac{1}{2}\|x^0-x\|^2+\frac{1}{2}\sum\limits_{k=0}^{K-1}\mathbb{E}\|u^{k+1}\|^2.
    \end{align*}
\end{lemma}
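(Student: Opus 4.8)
The plan is to prove this via the standard \emph{ghost iterate} (regularized follow-the-leader) argument, whose whole purpose is to convert the quantity $\mathbb{E}[\max_{x}\sum_k \langle u^{k+1}, x\rangle]$ — in which the expectation and the maximum do not commute — into a form where the only surviving $x$-dependence is deterministic. First I would introduce an auxiliary sequence $z^0 = x^0$ and $z^{k+1} = z^k + u^{k+1}$, so that $z^k = x^0 + \sum_{j=1}^{k} u^{j}$ is $\mathcal{F}_k$-measurable (the filtration is increasing and each $u^j$ with $j\le k$ is $\mathcal{F}_k$-measurable). The key algebraic identity comes from expanding the square: for any fixed $x$,
\[
\langle u^{k+1}, x - z^k\rangle = \tfrac12\|z^k - x\|^2 - \tfrac12\|z^{k+1}-x\|^2 + \tfrac12\|u^{k+1}\|^2.
\]

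Next I would sum this telescoping identity over $k = 0, \dots, K-1$, discard the non-positive term $-\tfrac12\|z^K - x\|^2$, and rearrange to isolate $\sum_k \langle u^{k+1}, x\rangle$. This gives
\[
\sum_{k=0}^{K-1}\langle u^{k+1}, x\rangle \le \sum_{k=0}^{K-1}\langle u^{k+1}, z^k\rangle + \tfrac12\|x^0 - x\|^2 + \tfrac12\sum_{k=0}^{K-1}\|u^{k+1}\|^2.
\]
The crucial observation is that the first sum on the right is \emph{independent of $x$}, so maximizing over $x \in \mathcal{C}$ leaves it untouched and acts only on the deterministic term $\tfrac12\|x^0 - x\|^2$.

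Finally I would take total expectation. Because $z^k$ is $\mathcal{F}_k$-measurable and $\mathbb{E}[u^{k+1}\mid \mathcal{F}_k] = 0$, the tower property yields $\mathbb{E}\langle u^{k+1}, z^k\rangle = \mathbb{E}\langle \mathbb{E}[u^{k+1}\mid\mathcal{F}_k], z^k\rangle = 0$ for every $k$, so the martingale term vanishes entirely. Compactness of $\mathcal{C}$ guarantees the maximum is attained and finite, and $\max_{x\in\mathcal{C}}\tfrac12\|x^0 - x\|^2$ is a constant, producing exactly the claimed bound.

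The main subtlety I expect is not the algebra but the ordering of operations: one cannot exchange $\mathbb{E}$ and $\max$ directly, and the entire trick is engineered so that after the telescoping rearrangement the maximization over $x$ sees only a deterministic quadratic, while the random part $\sum_k\langle u^{k+1}, z^k\rangle$ is both $x$-free and mean-zero. Getting the measurability bookkeeping right — that $z^k$ depends only on $u^1,\dots,u^k$ and is therefore $\mathcal{F}_k$-measurable, so that $\langle u^{k+1}, z^k\rangle$ is a genuine martingale difference — is the one place where care is essential.
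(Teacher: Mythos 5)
Your proof is correct and is essentially the same argument as the source: the paper itself does not prove this lemma (it imports it as Lemma 2.4 of \cite{alacaoglu2021stochastic}), and the proof given there is exactly your ghost-iterate/telescoping construction $z^{k+1}=z^k+u^{k+1}$, with the $x$-free martingale term $\sum_k\langle u^{k+1},z^k\rangle$ annihilated by the tower property after the maximization has been pushed onto the deterministic quadratic. Nothing is missing --- the $\mathcal{F}_k$-measurability of $z^k$ and the fact that $\mathbb{E}$ and $\max$ cannot be swapped directly are precisely the two points that matter, and you addressed both.
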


\begin{lemma} \label{lem:3} Under Assumption~\ref{ass} for iterates of Algorithm \ref{th:ALg1} the following inequality holds: 
\begin{equation}
    \mathbb{E}\left[\left\|\Delta^{k}-\mathbb{E}_{k}\left[\Delta^{k}\right]\right\|^{2}\right]
    \leq{\frac{2{\overline{{L}}}^{2}}{b}}\mathbb{E}\left[\left\|x^{k}-w^{k-1}\right\|^{2}
    +\left\|x^{k}-x^{k-1}\right\|^{2}\right],
\end{equation}
where $\mathbb{E}_k\left[\Delta^k\right]$ is equal to 
\begin{equation}
    \mathbb{E}_{k}\left[\Delta^{k}\right]=2F(x^{k})-F(x^{k-1}).
\end{equation}
\end{lemma}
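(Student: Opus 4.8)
The plan is to treat $\Delta^k$ as a minibatch estimator, bound its conditional variance, and then pass to the total expectation. Write the per-index summand as $X_j \eqdef (F_j(x^k)-F_j(w^{k-1}))+(F_j(x^k)-F_j(x^{k-1}))$, so that $\Delta^k = \frac{1}{b}\sum_{j\in S^k} X_j + F(w^{k-1})$. Since the indices $j_1^k,\dots,j_b^k$ are drawn independently and uniformly from $\{1,\dots,M\}$, for a single uniform draw $j$ one has $\mathbb{E}_k[X_j] = (F(x^k)-F(w^{k-1}))+(F(x^k)-F(x^{k-1}))$, and because $F(w^{k-1})$ is deterministic given the past, this immediately yields $\mathbb{E}_k[\Delta^k] = 2F(x^k)-F(x^{k-1})$, the claimed formula.

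For the variance, note that $\Delta^k - \mathbb{E}_k[\Delta^k] = \frac{1}{b}\sum_{i=1}^b (X_{j_i^k} - \mathbb{E}_k[X_{j_i^k}])$. Because the $j_i^k$ are i.i.d.\ given the past, the summands are mean-zero and mutually independent, so all cross terms vanish and $\mathbb{E}_k\|\Delta^k - \mathbb{E}_k[\Delta^k]\|^2 = \frac{1}{b}\,\mathrm{Var}_k(X_j)$ for a single uniform draw $j$. This is where the crucial factor $1/b$ appears, and it is precisely the step that makes batching ``free'' in the variance bound.

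It then remains to bound $\mathrm{Var}_k(X_j)$ by the two Lipschitz terms. I would split $X_j$ into its two constituent differences and use $\|u+v\|^2 \le 2\|u\|^2 + 2\|v\|^2$, then bound each centered part by its second moment via $\mathrm{Var}_k(Y)\le \mathbb{E}_k\|Y\|^2$ (taking the center at the origin). Applying the $L_m$-Lipschitzness of each $F_m$ and averaging over $m$ gives $\mathbb{E}_k\|F_j(x^k)-F_j(w^{k-1})\|^2 \le \bar{L}^2\|x^k-w^{k-1}\|^2$, and analogously with $x^{k-1}$, using $\bar{L}^2 = \frac{1}{M}\sum_m L_m^2$. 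Combining the factor $2$ with these two estimates produces $\mathrm{Var}_k(X_j) \le 2\bar{L}^2(\|x^k-w^{k-1}\|^2 + \|x^k-x^{k-1}\|^2)$; dividing by $b$ and taking the total expectation gives the claim.

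The computations are routine; the only points requiring care are the bookkeeping of the conditioning (ensuring that $F(w^{k-1})$ and the base points $x^k, w^{k-1}, x^{k-1}$ are measurable with respect to the past, so they may be treated as constants under $\mathbb{E}_k$), and the independence argument that annihilates the cross terms — this is the one place where sampling \emph{with} replacement, rather than without, is what yields the clean $1/b$ factor. The inequality $\|u+v\|^2\le 2\|u\|^2+2\|v\|^2$ is what accounts for the constant $2$ in the final bound, so I do not expect any genuine obstacle beyond careful accounting.
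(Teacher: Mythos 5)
Your proposal is correct and follows essentially the same route as the paper's proof: independence of the sampled indices kills the cross terms (yielding the $\nicefrac{1}{b}$ factor), the inequality $\|u+v\|^2\le 2\|u\|^2+2\|v\|^2$ accounts for the constant $2$, centering is dropped via $\mathbb{E}\|X-\mathbb{E}X\|^2\le\mathbb{E}\|X\|^2$, and uniform sampling plus $L_m$-Lipschitzness gives the $\bar L^2$ bound. The only difference is cosmetic — you apply the independence argument before splitting the summand into its two differences, whereas the paper splits first — and this changes nothing in the constants or the logic.
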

\begin{proof}
We start from line \ref{Alg1Line6} of Algorithm \ref{alg:sarah}
\begin{align*}
\mathbb{E}_k\Big[\big\|\Delta^k & -\mathbb{E}_k\left[\Delta^k\right]\big\|^2\Big] 
\\ 
 = &
 \mathbb{E}_k\Bigg[\bigg\|\frac{1}{b}\sum\limits_{j\in S^k}(F_j(x^k)-F_j(w^{k-1})+(F_j(x^k)-F_j(x^{k-1})))+F(w^{k-1})
 \\& - (2 F(x^k) - F(x^{k-1}))\bigg\|^2\Bigg].
\end{align*}
With Cauchy–Schwarz inequality, we have
 \begin{align*}
 \mathbb{E}_k\Big[\big\|\Delta^k & -\mathbb{E}_k\left[\Delta^k\right]\big\|^2\Big]
 \\
 \leq &
 2 \mathbb{E}_k\left[\left\|\frac{1}{b} \sum_{j \in S^k}\left(F_j\left(x^k\right)-F_j\left(w^{k-1}\right)\right)-\left(F\left(x^k\right)-F\left(w^{k-1}\right)\right)\right\|^2\right] \notag
 \\
& +2 \mathbb{E}_k\left[\left\|\frac{1}{b} \sum_{j \in S^k}\left(F_j\left(x^k\right)-F_j\left(x^{k-1}\right)\right)-
\left(F\left(x^k\right)-F\left(x^{k-1}\right)\right)\right\|^2\right].
\end{align*}
Using we choose $j_1^k,\ldots, j_b^k$ in $S^k$ indepdently and uniformly, one can note that
 \begin{align*}
&\mathbb{E}_k \Big[\langle \left( F_j\left(x^k\right)-F_j\left(w^{k-1}\right)\right)-\left(F\left(x^k\right)-F\left(w^{k-1}\right) \right), 
\\&\hspace{2cm}\left( F_j\left(x^k\right)-F_j\left(w^{k-1}\right)\right)-\left(F\left(x^k\right)-F\left(w^{k-1}\right) \right)\rangle \Big]
\\
&\hspace{1cm}=\mathbb{E}_k \bigg[\langle \mathbb{E}_{j^k_i} \left[\left( F_{j^k_i}\left(x^k\right)-F_{j^k_i}\left(w^{k-1}\right)\right)-\left(F\left(x^k\right)-F\left(w^{k-1}\right) \right) \right], 
\\
&\hspace{2cm}\mathbb{E}_{j^k_l} \left[\left( F_{j^k_l}\left(x^k\right)-F_{j^k_l}\left(w^{k-1}\right)\right)-\left(F\left(x^k\right)-F\left(w^{k-1}\right) \right) \right]\rangle \bigg]
\\
&\hspace{1cm}=0.
\end{align*}
Hence, we get
 \begin{align*}
 \mathbb{E}_k\Big[\big\|\Delta^k & -\mathbb{E}_k\left[\Delta^k\right]\big\|^2\Big]
 \\
\leq&2 \mathbb{E}_k\left[\sum_{j \in S^k}\frac{1}{b^2}\left\|\left(F_j\left(x^k\right)-F_j\left(w^{k-1}\right)\right)-\left(F\left(x^k\right)-F\left(w^{k-1}\right)\right)\right\|^2\right] \notag
  \\
&+ 2 \mathbb{E}_k\left[\sum_{j \in S^k}\frac{1}{b^2}\left\|\left(F_j\left(x^k\right)-F_j\left(x^{k-1}\right)\right)-\left(F\left(x^k\right)-F\left(x^{k-1}\right)\right)\right\|^2\right] \notag
\\
=&\frac{2}{b^2} \mathbb{E}_k\left[\sum_{j \in S^k}\left\|\left(F_j\left(x^k\right)-F_j\left(w^{k-1}\right)\right)-\left(F\left(x^k\right)-F\left(w^{k-1}\right)\right)\right\|^2\right] \notag
  \\
&+\frac{2}{b^2} \mathbb{E}_k\left[\sum_{j \in S^k}\left\|\left(F_j\left(x^k\right)-F_j\left(x^{k-1}\right)\right)-\left(F\left(x^k\right)-F\left(x^{k-1}\right)\right)\right\|^2\right] \notag
\\
 \leq & \frac{2}{b^2} \mathbb{E}_k\left[\sum_{j \in S^k}\left\|F_j\left(x^k\right)-F_j\left(w^{k-1}\right)\right\|^2\right]
\\
&+\frac{2}{b^2} \mathbb{E}_k\left[\sum_{j \in S^k}\left\|F_j\left(x^k\right)-F_j\left(x^{k-1}\right)\right\|^2\right]. \notag
\end{align*}
In the last step, we used the fact that $\mathbb{E}\|X-\mathbb{E}X\|^2 = \mathbb{E}\|X\|^2-\|\mathbb{E}X\|^2$. Next, we again take into account that $j_1^k,\ldots, j_b^k$ in $S^k$ are chosen uniformly
 \begin{align*}
 \mathbb{E}_k\Big[\big\|\Delta^k & -\mathbb{E}_k\left[\Delta^k\right]\big\|^2\Big]
 \\
 \leq &\frac{2}{b} \mathbb{E}_k\Bigg[\mathbb{E}_{j \sim \text { u.a.r. }\{1, \ldots, M\}}\bigg[\left\|F_j\left(x^k\right)-F_j\left(w^{k-1}\right)\right\|^2 
 \\& +\left\|F_j\left(x^k\right)-F_j\left(x^{k-1}\right)\right\|^2\bigg]\Bigg] \notag
 \\
 = &\frac{2}{M b} \sum_{j=1}^M \left(\left\|F_j\left(x^k\right)-F_j\left(w^{k-1}\right)\right\|^2+\left\|F_j\left(x^k\right)-F_j\left(x^{k-1}\right)\right\|^2\right) .
\end{align*}
Since each operator $F_j$ is $L_j$-Lipschitz, we can rewrite it as
\begin{equation*}
    \mathbb{E}_{k}\left[\left||\Delta^{k}-\mathbb{E}_{k}\left[\Delta^{k}\right]|\right|^{2}\right]\leq{\frac{2}{m b}}\sum_{j=1}^{m}L_{j}^{2}\left(||x^{k}-w^{k-1}||^{2}+\,\|x^{k}-x^{k-1}\|^{2}\right).
\end{equation*}
Applying the definition of $\overline{L}$, we obtain 
\begin{equation*}
    \mathbb{E}_{k}\left[\|\Delta^{k}-\mathbb{E}_{k}\left[\Delta^{k}\right]\|^{2}\right]\leq{\frac{2{\overline{{L}}}^{2}}{b}}\left(\|x^{k}-w^{k-1}\|^{2}+\left\|x^{k}-x^{k-1}\right\|^{2}\right).
\end{equation*}
Taking the full expectation concludes the proof. \hfill\(\Box\)
\end{proof}
\begin{lemma}\label{lem:4}
For iterates of Algorithm \ref{th:ALg1} with $\gamma = p$ the following bound holds for any compact set $\mathcal{C}\subseteq X$: 
\begin{equation*}
    \mathbb{E}\left[\max\limits_{x\in \mathcal{C}}\sum\limits_{k = 0}^{K - 1}e_1(x, k)\right] \leq 2\max\limits_{x \in \mathcal{C}} \{\|x - x^0\|^2\} + \frac{\gamma(1 - \gamma)}{2}\sum\limits_{k = 0}^{K - 1}\mathbb{E}\|x^{k+1} - \omega^k\|^2,
\end{equation*}
where $e_1(k,x) 
    = 
    \|w^{k+1}-x\|^2-\|w^k-x\|^2+(1-\gamma) \|x^{k+1}-x\|^2$.
\end{lemma}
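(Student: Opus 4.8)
The plan is to treat $\sum_{k} e_1(k,x)$ as a random function of the free variable $x$ and to split each term into its conditional expectation given the past plus a zero-mean remainder, so that the remainder (which turns out to be affine in $x$) can be fed into Lemma~\ref{lem:1}. Write $\mathbb{E}_k$ for the conditional expectation given everything up to iteration $k$, and note that the only fresh randomness in $w^{k+1}$ is the coin of Line~\ref{Alg1Line8}, so $w^{k+1}=x^{k+1}$ with probability $\gamma=p$ and $w^{k+1}=w^k$ otherwise, while $x^{k+1}$ from Line~\ref{Alg1Line7} is already determined before that coin. First I would record the two identities produced by this two-point law: $\mathbb{E}_k\|w^{k+1}-x\|^2=\gamma\|x^{k+1}-x\|^2+(1-\gamma)\|w^k-x\|^2$, and hence $\mathbb{E}_k[e_1(k,x)]=\|x^{k+1}-x\|^2-\gamma\|w^k-x\|^2$.

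Next, subtracting gives the remainder $e_1(k,x)-\mathbb{E}_k[e_1(k,x)]=\|w^{k+1}-x\|^2-\mathbb{E}_k\|w^{k+1}-x\|^2$, since the terms $\|w^k-x\|^2$ and $(1-\gamma)\|x^{k+1}-x\|^2$ are measurable before the coin flip. Expanding the square, this remainder is an $x$-free martingale difference $\|w^{k+1}\|^2-\mathbb{E}_k\|w^{k+1}\|^2$ plus a linear term $\langle u^{k+1},x\rangle$, where $u^{k+1}:=-2\bigl(w^{k+1}-\mathbb{E}_k w^{k+1}\bigr)$ satisfies $\mathbb{E}_k[u^{k+1}]=0$. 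Thus $\sum_k e_1(k,x)$ splits into the predictable sum $\sum_k(\|x^{k+1}-x\|^2-\gamma\|w^k-x\|^2)$, a purely $x$-independent martingale sum that pulls out of $\max_{x\in\mathcal{C}}$ and vanishes under $\mathbb{E}$, and the linear martingale sum $\sum_k\langle u^{k+1},x\rangle$.

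For the linear part I would apply Lemma~\ref{lem:1} to the process $(u^{k+1})$, which bounds $\mathbb{E}\bigl[\max_{x\in\mathcal{C}}\sum_k\langle u^{k+1},x\rangle\bigr]$ by $\max_{x\in\mathcal{C}}\tfrac12\|x^0-x\|^2+\tfrac12\sum_k\mathbb{E}\|u^{k+1}\|^2$. The variance follows directly from the two-point law: since $w^{k+1}-\mathbb{E}_k w^{k+1}$ equals $(1-\gamma)(x^{k+1}-w^k)$ with probability $\gamma$ and $-\gamma(x^{k+1}-w^k)$ with probability $1-\gamma$, one obtains $\mathbb{E}_k\|u^{k+1}\|^2=4\gamma(1-\gamma)\|x^{k+1}-w^k\|^2$. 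This is precisely the mechanism that produces the $\|x^{k+1}-w^k\|^2$ term on the right-hand side, modulo tracking the numerical constant, and it is also the source of one of the $\max_{x\in\mathcal{C}}\|x-x^0\|^2$ contributions.

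I expect the predictable sum $\mathbb{E}\bigl[\max_{x\in\mathcal{C}}\sum_k(\|x^{k+1}-x\|^2-\gamma\|w^k-x\|^2)\bigr]$ to be the main obstacle, because it does not telescope term by term and the free variable $x$ couples all summands. The intended route is to re-expose the hidden telescope by writing $\gamma\|x^{k+1}-x\|^2=\mathbb{E}_k\|w^{k+1}-x\|^2-(1-\gamma)\|w^k-x\|^2$, so that the predictable part is re-expressed through differences of the potential $\|w^{\cdot}-x\|^2$; summing collapses these against the boundary values $\|w^0-x\|^2=\|x^0-x\|^2$ and $\|w^K-x\|^2$, after which the surviving $\|w^K-x\|^2$ is absorbed, once $\max_{x\in\mathcal{C}}$ and $\mathbb{E}$ are taken, into the $\max_{x\in\mathcal{C}}\|x-x^0\|^2$ budget. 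Carefully collecting these $\max_{x\in\mathcal{C}}\|x-x^0\|^2$ contributions together with those from Lemma~\ref{lem:1}, and the variance contributions governing the $\|x^{k+1}-w^k\|^2$ term, then yields the stated inequality; the part requiring the most care is verifying that the coefficients arising from this reassembly indeed collapse to the claimed constants.
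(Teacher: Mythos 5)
Your core mechanism is exactly the paper's: expose the coin flip of line~\ref{Alg1Line8}, note that the fluctuation $e_1(k,x)-\mathbb{E}_k[e_1(k,x)]=\|w^{k+1}-x\|^2-\mathbb{E}_k\|w^{k+1}-x\|^2$ is affine in $x$, split it into an $x$-free martingale difference (which exits the maximum and dies under the expectation) plus $\langle u^{k+1},x\rangle$ with $\mathbb{E}_k[u^{k+1}]=0$, feed the latter into Lemma~\ref{lem:1}, and evaluate the two-point variance $\mathbb{E}_k\|w^{k+1}-\mathbb{E}_k[w^{k+1}]\|^2=\gamma(1-\gamma)\|x^{k+1}-w^k\|^2$. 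The paper does precisely this with $u^{k+1}=\gamma x^{k+1}+(1-\gamma)w^k-w^{k+1}$, i.e.\ half of your $u^{k+1}$.

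The genuine gap is the ``predictable part'' you promise to telescope away; that step cannot be carried out, and it is where your attempt diverges from the paper. The printed definition of $e_1$ contains a typo: the quantity the paper actually bounds here, and then uses in the proof of Theorem~\ref{th:ALg1}, is $e_1(k,x)=\|w^{k+1}-x\|^2-(1-\gamma)\|w^k-x\|^2-\gamma\|x^{k+1}-x\|^2$, for which your own two-point identity gives $\mathbb{E}_k[e_1(k,x)]=0$; the predictable part vanishes identically and the proof is finished once Lemma~\ref{lem:1} is applied. For the literal definition the lemma is simply false, so no manipulation can rescue your plan: take $F_j\equiv 0$ and $g\equiv 0$, so that $x^{k+1}=w^k=x^0$ for every $k$; then $\max_{x\in\mathcal C}\sum_{k=0}^{K-1}e_1(k,x)=K(1-\gamma)\max_{x\in\mathcal C}\|x^0-x\|^2$ grows linearly in $K$, while the claimed right-hand side equals $2\max_{x\in\mathcal C}\|x^0-x\|^2$ because every $\|x^{k+1}-w^k\|^2$ is zero. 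Your proposed substitution $\gamma\|x^{k+1}-x\|^2=\mathbb{E}_k\|w^{k+1}-x\|^2-(1-\gamma)\|w^k-x\|^2$ is also circular: it trades $\|w^{k+1}-x\|^2$ for its conditional expectation, i.e.\ changes the sum by exactly the martingale increments you have already extracted, so it cannot create a bounded telescope that was not there. The correct move is to recognize the typo and work with the zero-mean $e_1$.

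A second, quantitative issue: Lemma~\ref{lem:1} is not scale-invariant, and your normalization (applying it to $u^{k+1}=-2(w^{k+1}-\mathbb{E}_k[w^{k+1}])$) yields $\frac{1}{2}\max_{x\in\mathcal C}\|x^0-x\|^2+2\gamma(1-\gamma)\sum_{k}\mathbb{E}\|x^{k+1}-w^k\|^2$. The variance coefficient $2\gamma(1-\gamma)$ is four times the claimed $\frac{\gamma(1-\gamma)}{2}$, and this is not cosmetic: in the proof of Theorem~\ref{th:ALg1} this term must be absorbed by $-\frac{\gamma}{K}\sum_k\|w^k-x^{k+1}\|^2$, which is impossible since $2\gamma(1-\gamma)>\gamma$ for $\gamma\le\frac{1}{16}$. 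The fix is the scaled form of Lemma~\ref{lem:1}: for any $\alpha>0$, $\mathbb{E}\left[\max_{x\in\mathcal C}\sum_k\langle u^{k+1},x\rangle\right]\le\frac{1}{2\alpha}\max_{x\in\mathcal C}\|x^0-x\|^2+\frac{\alpha}{2}\sum_k\mathbb{E}\|u^{k+1}\|^2$; choosing $\alpha=\frac{1}{4}$ gives exactly the stated $2\max_{x\in\mathcal C}\|x^0-x\|^2+\frac{\gamma(1-\gamma)}{2}\sum_k\mathbb{E}\|x^{k+1}-w^k\|^2$. (The paper's own displayed application of Lemma~\ref{lem:1} is also off by constants, but its claimed bound is recoverable in this way.)
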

\begin{proof}
For shortness we introduce
\begin{align*}
    u^{k+1} = \gamma x^{k+1} + (1 - \gamma)\omega^k - \omega^{k+1}.
\end{align*}
With new notation, we can rewrite $e_1(k,x)$ as
\begin{align*}
    e_1(k,x) = 2\left\langle u^{k+1}, x\right\rangle - \gamma \|x^{k+1}\|^2-(1-\gamma)\|w^k\|^2+\|w^k\|^2.
\end{align*}
From line \ref{Alg1Line8} of Algorithm \ref{th:ALg1} and using that $\gamma = p$, one can obtain
    \begin{equation*}
        \mathbb{E}\left[ \mathbb{E}_k[\|\omega^{k+1}\|^2 - \gamma\|x^{k+1}\|^2 - (1 - \gamma)\|\omega^k\|^2 ]\right] = 0.
    \end{equation*}
Using two properties above, we reach for any compact set $\mathcal{C}\subseteq X$
\begin{align*}
\mathbb{E}\left[\max _{x \in \mathcal{C}} \sum_{k=0}^{K-1} e_1(k, x)\right]
=&2 \mathbb{E}\left[\max _{x \in \mathcal{C}} \sum_{k=0}^{K-1}\left\langle u^{k+1}, x\right\rangle\right] +  \mathbb{E}\big[- \gamma \|x^{k+1}\|^2
\\&-(1-\gamma)\|w^k\|^2+\|w^k\|^2 \big]
\\
=&2 \mathbb{E}\left[\max _{x \in \mathcal{C}} \sum_{k=0}^{K-1}\left\langle u^{k+1}, x\right\rangle\right].
\end{align*}
With $\mathcal{F}_k = \sigma(\xi_0, \dots, \xi_k, x^k)$ we have that $\mathbb{E}[u^{k+1}|\mathcal{F}_k]=0$. It means that we can apply Lemma \ref{lem:1}. Thus, we get
\begin{align}
\mathbb{E}\left[\max _{x \in \mathcal{C}} \sum_{k=0}^{K-1} e_1(k, x)\right]
 \leq & 2\max _{x \in \mathcal{C}}\left\|x_0-x\right\|^2+ \frac{1}{2}\sum_{k=0}^{K-1} \mathbb{E}\left\|u^{k+1}\right\|^2.
 \label{lem4:beforeDisp}
\end{align}
We estimate $\|u_{k+1}\|^2$ using the fact that $\mathbb{E}\|X-\mathbb{E}X\|^2 = \mathbb{E}\|X\|^2-\|\mathbb{E}X\|^2$ and line \ref{Alg1Line8} of Algorithm \ref{th:ALg1}:
\begin{align*}
\mathbb{E}\left\|u_{k+1}\right\|^2 
& =
\mathbb{E}\left[\mathbb{E}_{k}\left\|u_{k+1}\right\|^2\right]
=\mathbb{E}\left[\mathbb{E}_{k}\left\|\mathbb{E}_{k}\left[\mathbf{\omega}_{k+1}\right]-\mathbf{\omega}_{k+1}\right\|^2\right] 
\\
& =
\mathbb{E}\left[\mathbb{E}_{k}\left\|\mathbf{\omega}_{k+1}\right\|^2-\left\|\mathbb{E}_{k}\left[\mathbf{\omega}_{k+1}\right]\right\|^2\right] 
\\
& =
\mathbb{E}\left[\gamma\left\|x_{k+1}\right\|^2+(1-\gamma)\left\|\mathbf{\omega}_k\right\|^2-\left\|\gamma x_{k+1}+(1-\gamma) \mathbf{\omega}_k\right\|^2\right] 
\\
& =
\gamma(1-\gamma) \mathbb{E}\left\|x_{k+1}-\omega_k\right\|^2.
\end{align*}
Applying this result to (\ref{lem4:beforeDisp}), we get
\begin{align*}
\mathbb{E}\left[\max _{x \in \mathcal{C}} \sum_{k=0}^{K-1} e_1(k, x)\right]  =2\max _{x \in \mathcal{C}}\left\|x_0-x\right\|^2+ \frac{\gamma(1-\gamma)}{2} \sum_{k=0}^{K-1} \mathbb{E}\left\|x_{k+1}-\omega_k\right\|^2.
\end{align*}
\end{proof} 
\textit{Proof of Theorem \ref{th:ALg1}}.
We start from  
\begin{align*}
\|x^{k+1}-x\|^2  = & \|x^k-x\|^2+2\langle x^{k+1}-x^k, x^{k+1}-x \rangle - \|x^{k+1}-x^k\|^2
\\
= & \|x^k-x\|^2+2\gamma\langle 
w^k-x^k, x^{k+1}-x \rangle
- 2\eta \langle\Delta^k, x^{k+1}-x\rangle
\\
&-\|x^{k+1}-x^k\|^2-2[\langle x^k+\gamma (w^k-x^k)-\eta \Delta^k-x^{k+1}, x^{k+1}-x\rangle].
\end{align*}
From line \ref{Alg1Line7} of Algorithm \ref{alg:sarah} and according to the property (\ref{prox_property}) of proximal operator, it follows, that 
\begin{align*}
    x^{k}+\gamma(w^{k}-x^{k})-\eta\Delta^{k}-x^{k+1}\in\partial(\eta g)(x^{k+1}).
\end{align*}
From convexity of $g(\cdot)$, we obtain
\begin{align*}
      \|x^{k+1}-x\|^2\leq &\|x^k-x\|^2+2\gamma\langle 
w^k-x^k, x^{k+1}-x \rangle-2\eta \langle\Delta^k, x^{k+1}-x\rangle
\\ &
-\|x^{k+1}-x^k\|^2
+ 2\eta g(x) - 2\eta g(x^{k+1}). 
\end{align*}
Using $2\gamma \langle w^k - x^k, x^{k+1} - x\rangle = 2\gamma \langle w^k - x, x^{k+1} - x\rangle - 2\gamma \langle x^k - x, x^{k+1} - x\rangle$ and the following property of scalar product: $2\langle a, b \rangle = \|a+b\|^2 - \|a\|^2- \|b\|^2$, we get
\begin{align*}
\|x^{k+1}-x\|^2 
\leq & 
\|x^k-x\|^2+\gamma\left( \|w^k-x\|^2+\|x^{k+1}-x\|^2-\|x^{k+1}-w^k\|^2\right)
\\
&
-
2\eta\langle \Delta^k, x^{k+1}-x\rangle
-\gamma\|x^{k+1}-x\|^2-\gamma \|x^k-x\|^2
\\
&
+
\gamma \|x^{k+1}-x^k\|^2
-
\|x^{k+1}-x^k\|^2 + 2 \eta g(x) - 2 \eta g(x^{k+1})
\\
= 
&
\|x^k-x\|^2+\gamma \|w^k-x\|^2 - \gamma\|x^k-x\|^2-\gamma \|x^{k+1}-w^k\|^2
\\
&
-
2\eta \langle \Delta^k,x^{k+1}-x \rangle - (1-\gamma)\|x^{k+1}-x^k\|^2 + 2\eta g(x) - 2\eta g(x^{k+1}).
\end{align*} 
Applying the properties of $\mathbb{E}_k[\Delta^k]$ specified in Lemma \ref{lem:3}, we obtain
\begin{align*}
\|x^{k+1}-x\|^2
\leq & \|x^k-x\|^2
+\gamma \left\|w^k-x\right\|^2
-\gamma \|x^k-x\|^2-\gamma \|w^k-x^{k+1}\|^2 
\\ &
- 2 \eta \langle\mathbb{E}_k\left[\Delta^k\right], x^{k+1}-x\rangle
-(1-\gamma) \|x^{k+1}-x^k\|^2
\\ & 
+2 \eta \langle\mathbb{E}_k\left[\Delta^k\right]-\Delta^k, x^{k+1}-x^k\rangle
+2 \eta \langle\mathbb{E}_k\left[\Delta^k\right]-\Delta^k, x^k-x\rangle
\\ & + 2\eta g(x) - 2\eta g(x^{k+1}) 
\\ 
= & \left\|x^k-x\right\|^2
+\gamma \left\|w^k-x\right\|^2-\gamma \left\|x^k-x\right\|^2-\gamma \left\|w^k-x^{k+1}\right\|^2 \\ &-2 \eta \left\langle F\left(x^k\right)+F\left(x^k\right)-F\left(x^{k-1}\right), x^{k+1}-x\right\rangle
\\&-(1-\gamma) \left\|x^{k+1}-x^k\right\|^2 +2 \eta \left\langle\mathbb{E}_k\left[\Delta^k\right]-\Delta^k, x^{k+1}-x^k\right\rangle
\\& +2 \eta \left\langle\mathbb{E}_k\left[\Delta^k\right]-\Delta^k, x^k-x\right\rangle 
\\&
+ 
2\eta g(x) - 2\eta g(x^{k+1})
\\
= &
\left\|x^k-x\right\|^2
+\gamma \left\|w^k-x\right\|^2-\gamma \left\|x^k-x\right\|^2-\gamma \left\|w^k-x^{k+1}\right\|^2 
\\&
-
2\eta \langle F(x^k)-F(x^{k+1})+F(x^k)-F(x^{k-1}), x^{k+1}-x \rangle
\\&
-2\eta \langle F(x^{k+1}), x^{k+1}-x \rangle
\\&
-
(1-\gamma) \left\|x^{k+1}-x^k\right\|^2 
+2 \eta \left\langle\mathbb{E}_k\left[\Delta^k\right]
-\Delta^k, x^{k+1}-x^k\right\rangle
\\&
+2 \eta \left\langle\mathbb{E}_k\left[\Delta^k\right]
-\Delta^k, x^k-x\right\rangle
+
2\eta g(x) - 2\eta g(x^{k+1})
.\end{align*}
By a simple rearrangements, we obtain 
\begin{align*}
2\eta (g(x^{k+1}) -g(x))
&+2\eta \langle F(x^{k+1}), x^{k+1}-x \rangle
\\
\leq &
\left\|x^k-x\right\|^2
+\gamma \left\|w^k-x\right\|^2-\gamma \left\|x^k-x\right\|^2
\\&-\gamma \left\|w^k-x^{k+1}\right\|^2- \|x^{k+1}-x\|^2
\\&
-
2\eta \langle F(x^k)-F(x^{k+1})+F(x^k)-F(x^{k-1}), x^{k+1}-x \rangle
\\&-(1-\gamma) \left\|x^{k+1}-x^k\right\|^2 
+
2 \eta \left\langle\mathbb{E}_k\left[\Delta^k\right]
-\Delta^k, x^{k+1}-x^k\right\rangle
\\&
+2 \eta \left\langle\mathbb{E}_k\left[\Delta^k\right]
-\Delta^k, x^k-x\right\rangle 
\\
= &
(1 - \gamma)\left\|x^k-x\right\|^2 + \left\|w^k-x\right\|^2
\\
&-(1-\gamma)\left\|x^{k+1}-x\right\|^2 -  \left\|w^{k+1}-x\right\|^2
\\
&+ \left\|w^{k+1}-x\right\|^2 -\gamma\left\|x^{k+1}-x\right\|^2
\\
&- (1 - \gamma) \left\|w^k-x\right\|^2-\gamma \left\|w^k-x^{k+1}\right\|^2
\\
&-
2\eta \langle F(x^k)-F(x^{k+1}), x^{k+1}-x \rangle
\\
&+2\eta \langle F(x^{k-1})-F(x^k), x^{k+1}-x \rangle
\\
&-
(1-\gamma) \left\|x^{k+1}-x^k\right\|^2 
+2 \eta \left\langle\mathbb{E}_k\left[\Delta^k\right]
-\Delta^k, x^{k+1}-x^k\right\rangle
\\&
+2 \eta \left\langle\mathbb{E}_k\left[\Delta^k\right]
-\Delta^k, x^k-x\right\rangle
\\
= &
(1 - \gamma)\left\|x^k-x\right\|^2 + \left\|w^k-x\right\|^2
\\
&-(1-\gamma)\left\|x^{k+1}-x\right\|^2 -  \left\|w^{k+1}-x\right\|^2
\\
&+ \left\|w^{k+1}-x\right\|^2 -\gamma\left\|x^{k+1}-x\right\|^2
\\
&- (1 - \gamma) \left\|w^k-x\right\|^2-\gamma \left\|w^k-x^{k+1}\right\|^2
\\&
-
2\eta \langle F(x^k)-F(x^{k+1}), x^{k+1}-x \rangle
\\&
+2\eta \langle F(x^{k-1})-F(x^k), x^k-x \rangle
\\&
+2\eta \langle F(x^{k-1})-F(x^k), x^{k+1}-x^k \rangle
\\&
-
(1-\gamma) \left\|x^{k+1}-x^k\right\|^2
+2 \eta \left\langle\mathbb{E}_k\left[\Delta^k\right]
-\Delta^k, x^{k+1}-x^k\right\rangle
\\&
+2 \eta \left\langle\mathbb{E}_k\left[\Delta^k\right]
-\Delta^k, x^k-x\right\rangle.
\end{align*}
After taking sum and then averaging, one can get
\begin{align}
\label{eq:temp30457}
2\eta \cdot \frac{1}{K}\sum\limits_{k=0}^{K-1}
&\left[\langle F(x^{k+1}), x^{k+1}-x \rangle + g(x^{k+1}) - g(x) \right]
\notag\\
\leq & 
\frac{1}{K}\sum\limits_{k=0}^{K-1} \left[(1-\gamma)\left\|x^k-x\right\|^2 +  \left\|w^k-x\right\|^2\right]
\notag\\&
-\frac{1}{K}\sum\limits_{k=0}^{K-1} \left[(1-\gamma)\left\|x^{k+1}-x\right\|^2 +  \left\|w^{k+1}-x\right\|^2\right]
\notag\\&
-
2\eta \cdot \frac{1}{K}\sum\limits_{k=0}^{K-1} \langle F(x^k)-F(x^{k+1}), x^{k+1}-x \rangle
\notag\\&
+
2\eta \cdot \frac{1}{K}\sum\limits_{k=0}^{K-1} \langle F(x^{k-1})-F(x^k), x^k-x \rangle
\notag\\& 
+\frac{1}{K} \sum\limits_{k=0}^{K-1} \left[ \left\|w^{k+1}-x\right\|^2 - \gamma  \left\|x^{k+1}-x\right\|^2 - (1 - \gamma)\left\|w^{k}-x\right\|^2\right]
\notag\\&
+2 \eta \cdot \frac{1}{K} \sum\limits_{k=0}^{K-1}\left\langle\mathbb{E}_k\left[\Delta^k\right]
-\Delta^k, x^k-x\right\rangle
\notag\\& 
-\frac{\gamma}{K}\sum\limits_{k=0}^{K-1}  \left\|w^k-x^{k+1}\right\|^2 - \frac{1 - \gamma}{K}\sum\limits_{k=0}^{K-1} \left\|x^{k+1}-x^k\right\|^2
\notag\\&
+2\eta \cdot \frac{1}{K}\sum\limits_{k=0}^{K-1} \langle F(x^{k-1})-F(x^k), x^{k+1}-x^k \rangle
\notag\\&
+2 \eta \cdot \frac{1}{K}\sum\limits_{k=0}^{K-1}\left\langle\mathbb{E}_k\left[\Delta^k\right]
-\Delta^k, x^{k+1}-x^k\right\rangle
\notag\\ 
= & 
\frac{2-\gamma}{K}\left\|x^0-x\right\|^2 - \frac{1 - \gamma}{K}\left\|x^K-x\right\|^2 - \frac{1}{K} \left\|w^K-x\right\|^2
\notag\\&
-
2 \eta \cdot \frac{1}{K} \langle F(x^{K-1})-F(x^{K}), x^{K}-x \rangle
\notag\\& 
+\frac{1}{K}\sum\limits_{k=0}^{K-1} \left[ \left\|w^{k+1}-x\right\|^2 - \gamma  \left\|x^{k+1}-x\right\|^2 - (1 - \gamma)\left\|w^{k}-x\right\|^2\right]
\notag\\&
+2 \eta \cdot\frac{1}{K}\sum\limits_{k=0}^{K-1}\left\langle\mathbb{E}_k\left[\Delta^k\right]
-\Delta^k, x^k-x\right\rangle
\notag\\& 
-\frac{\gamma}{K}\sum\limits_{k=0}^{K-1}  \left\|w^k-x^{k+1}\right\|^2 - \frac{1 - \gamma}{K}\sum\limits_{k=0}^{K-1} \left\|x^{k+1}-x^k\right\|^2
\notag\\&
+2\eta \cdot\frac{1}{K}\sum\limits_{k=0}^{K-1} \langle F(x^{k-1})-F(x^k), x^{k+1}-x^k \rangle
\notag\\&
+2 \eta \cdot \frac{1}{K}\sum\limits_{k=0}^{K-1}\left\langle\mathbb{E}_k\left[\Delta^k\right]
-\Delta^k, x^{k+1}-x^k\right\rangle
.\end{align}
Here we also used the initialization of Algorithm \ref{alg:sarah} with $w^0 = x^{-1} = x^0$. Applying Young's inequality, 
using the $L$-Lipshetzness of $F$, and taking into account the definition of $\eta \leq \frac{1}{8L}$ from conditions of the theorem for any $k$, one can obtain
\begin{align}
\label{eq:temp30456}
-
2 \eta \langle F(x^{K-1})-F(x^{K}), x^{K}-x \rangle
\leq & 
2\eta^2 \left\| F(x^{K-1})-F(x^{K}) \right\|^2 +
\frac{1}{2}\left\| x^{K}-x \right\|^2
\notag\\
\leq & 
2\eta^2 L^2 \left\| x^{K-1} -x^{K} \right\|^2 +
\frac{1}{2}\left\| x^{K}-x \right\|^2
\notag\\
\leq & 
\frac{1}{32} \left\| x^{K-1} -x^{K} \right\|^2 +
\frac{1}{2}\left\| x^{K}-x \right\|^2
.\end{align}
Combining \eqref{eq:temp30457} and \eqref{eq:temp30456}, we get
\begin{align*}
2\eta \cdot \frac{1}{K}
\sum\limits_{k=0}^{K-1} &\big[ \langle F(x^{k+1}), x^{k+1}-x \rangle + g(x^{k+1}) - g(x) \big]
\notag\\
\leq & 
\frac{2 - \gamma}{K}\left\|x^0-x\right\|^2 - \frac{1}{K}\left(\frac{1}{2} - \gamma \right)\left\|x^K-x\right\|^2 - \frac{1}{K}\left\|w^K-x\right\|^2
\notag
\\& 
+\frac{1}{K}\sum\limits_{k=0}^{K-1} \left[ \left\|w^{k+1}-x\right\|^2 - \gamma  \left\|x^{k+1}-x\right\|^2 - (1 - \gamma)\left\|w^{k}-x\right\|^2\right]
\notag\\&
+\frac{1}{32K} \left\| x^{K-1} -x^{K} \right\|^2+2 \eta \cdot \frac{1}{K}\sum\limits_{k=0}^{K-1}\left\langle\mathbb{E}_k\left[\Delta^k\right]
-\Delta^k, x^k-x\right\rangle
\notag\\& 
-\frac{\gamma}{K}\sum\limits_{k=0}^{K-1}  \left\|w^k-x^{k+1}\right\|^2 - \frac{1 - \gamma}{K}\sum\limits_{k=0}^{K-1} \left\|x^{k+1}-x^k\right\|^2
\notag\\&
+2\eta \cdot \frac{1}{K}\sum\limits_{k=0}^{K-1} \langle F(x^{k-1})-F(x^k), x^{k+1}-x^k \rangle
\notag\\&
+2 \eta \cdot \frac{1}{K}\sum\limits_{k=0}^{K-1}\left\langle\mathbb{E}_k\left[\Delta^k\right]
-\Delta^k, x^{k+1}-x^k\right\rangle
\\
\leq & 
\frac{2 - \gamma}{K}\left\|x^0-x\right\|^2
\notag
\\&
+\frac{1}{K}\sum\limits_{k=0}^{K-1} \left[ \left\|w^{k+1}-x\right\|^2 - \gamma  \left\|x^{k+1}-x\right\|^2 - (1 - \gamma)\left\|w^{k}-x\right\|^2\right]
\notag\\&
+2 \eta \cdot \frac{1}{K}\sum\limits_{k=0}^{K-1}\left\langle\mathbb{E}_k\left[\Delta^k\right]
-\Delta^k, x^k-x\right\rangle -\frac{\gamma}{K}\sum\limits_{k=0}^{K-1}  \left\|w^k-x^{k+1}\right\|^2
\notag\\& 
- \frac{1 - \gamma}{K}\sum\limits_{k=0}^{K-1} \left\|x^{k+1}-x^k\right\|^2 +\frac{1}{32K} \left\| x^{K-1} -x^{K} \right\|^2 
\notag\\&
+2\eta \cdot \frac{1}{K}\sum\limits_{k=0}^{K-1} \langle F(x^{k-1})-F(x^k), x^{k+1}-x^k \rangle
\notag\\&
+2 \eta \cdot \frac{1}{K}\sum\limits_{k=0}^{K-1}\left\langle\mathbb{E}_k\left[\Delta^k\right]
-\Delta^k, x^{k+1}-x^k\right\rangle
.\end{align*}
Next, we use monotonicity of $F$, apply Jensen's inequality for the convex function $g$ and obtain
\begin{align*}
2\eta \big[ \langle F(x), &\frac{1}{K}
\sum\limits_{k=0}^{K-1} x^{k+1}-x \rangle  + g\left(\frac{1}{K}
\sum\limits_{k=0}^{K-1} x^{k+1}\right) - g(x) \big]
\\
\leq & 
\frac{2 - \gamma}{K}\left\|x^0-x\right\|^2
\notag
\\&
+\frac{1}{K}\sum\limits_{k=0}^{K-1} \left[ \left\|w^{k+1}-x\right\|^2 - \gamma  \left\|x^{k+1}-x\right\|^2 - (1 - \gamma)\left\|w^{k}-x\right\|^2\right]
\notag\\&
+2 \eta \cdot \frac{1}{K}\sum\limits_{k=0}^{K-1}\left\langle\mathbb{E}_k\left[\Delta^k\right]
-\Delta^k, x^k-x\right\rangle-\frac{\gamma}{K}\sum\limits_{k=0}^{K-1}  \left\|w^k-x^{k+1}\right\|^2 
\notag\\& 
- \frac{1 - \gamma}{K}\sum\limits_{k=0}^{K-1} \left\|x^{k+1}-x^k\right\|^2 +\frac{1}{32K} \left\| x^{K-1} -x^{K} \right\|^2 
\notag\\&
+2\eta \cdot \frac{1}{K}\sum\limits_{k=0}^{K-1} \langle F(x^{k-1})-F(x^k), x^{k+1}-x^k \rangle
\notag\\&
+2 \eta \cdot \frac{1}{K}\sum\limits_{k=0}^{K-1}\left\langle\mathbb{E}_k\left[\Delta^k\right]
-\Delta^k, x^{k+1}-x^k\right\rangle
.\end{align*}
Using new notation $\bar x^K = \frac{1}{K}
\sum\limits_{k=0}^{K-1} x^{k+1}$ and taking maximum on $\mathcal{C}$,  we achieve
\begin{align*}
2\eta &\text{Gap} (\bar x^K) \leq 
\max\limits_{x\in \mathcal{C}} \Bigg\{ \frac{2 - \gamma}{K}\left\|x^0-x\right\|^2 
\\& 
+\frac{1}{K}\sum\limits_{k=0}^{K-1} \left[ \left\|w^{k+1}-x\right\|^2 - \gamma  \left\|x^{k+1}-x\right\|^2 - (1 - \gamma)\left\|w^{k}-x\right\|^2\right]
\\&
+2 \eta \cdot \frac{1}{K}\sum\limits_{k=0}^{K-1}\left\langle\mathbb{E}_k\left[\Delta^k\right]
-\Delta^k, x^k-x\right\rangle \Bigg\} + \frac{1}{32 K} \left\| x^{K-1} -x^{K} \right\|^2
\\& 
-\frac{\gamma}{K}\sum\limits_{k=0}^{K-1}  \left\|w^k-x^{k+1}\right\|^2 -\frac{1 - \gamma}{K}\sum\limits_{k=0}^{K-1} \left\|x^{k+1}-x^k\right\|^2
\\&
+2\eta \cdot \frac{1}{K}\sum\limits_{k=0}^{K-1} \langle F(x^{k-1})-F(x^k), x^{k+1}-x^k \rangle
\\&
+2 \eta \cdot \frac{1}{K}\sum\limits_{k=0}^{K-1}\left\langle\mathbb{E}_k\left[\Delta^k\right]
-\Delta^k, x^{k+1}-x^k\right\rangle
\\ \leq &
\max\limits_{x\in \mathcal{C}} \left\{\frac{2-\gamma}{K} \left\|x^0-x\right\|^2  \right\}
\\&+\max\limits_{x\in \mathcal{C}} \Bigg\{\frac{1}{K}\sum\limits_{k=0}^{K-1} \left[ \left\|w^{k+1}-x\right\|^2
- \gamma  \left\|x^{k+1}-x\right\|^2- (1 - \gamma)\left\|w^{k}-x\right\|^2\right] \Bigg\}
\\&
+2 \eta \max\limits_{x\in \mathcal{C}} \left\{\frac{1}{K} \sum\limits_{k=0}^{K-1}\left\langle\mathbb{E}_k\left[\Delta^k\right]
-\Delta^k, x^k-x\right\rangle \right\} + \frac{1}{32 K} \left\| x^{K-1} -x^{K} \right\|^2
\\& 
-\frac{\gamma}{K}\sum\limits_{k=0}^{K-1}  \left\|w^k-x^{k+1}\right\|^2 - \frac{1 -\gamma}{K}\sum\limits_{k=0}^{K-1} \left\|x^{k+1}-x^k\right\|^2
\\&
+2\eta \cdot \frac{1}{K}\sum\limits_{k=0}^{K-1} \langle F(x^{k-1})-F(x^k), x^{k+1}-x^k \rangle
\\&
+2 \eta \cdot \frac{1}{K}\sum\limits_{k=0}^{K-1}\left\langle\mathbb{E}_k\left[\Delta^k\right]
-\Delta^k, x^{k+1}-x^k\right\rangle
.\end{align*}
Here we also used that maximum of the sum not greater than the sum of the maximums.
After that we take the an expectation and get
\begin{align*}
2\eta 
\mathbb{E}\left[\text{Gap}  (\bar x^K) \right]\notag
\leq & \mathbb{E}\Bigg[\max\limits_{x\in \mathcal{C}} \Bigg\{\frac{2 - \gamma}{K}\left\|x^0-x\right\|^2
\Bigg\} \Bigg] \notag
\\&
+\mathbb{E}\Bigg[\max\limits_{x\in \mathcal{C}} \Bigg\{\frac{1}{K}\sum\limits_{k=0}^{K-1} \Big[ \left\|w^{k+1}-x\right\|^2 - \gamma  \left\|x^{k+1}-x\right\|^2
\\&- (1 - \gamma)\left\|w^{k}-x\right\|^2\Big] \Bigg\}\Bigg]+ \frac{1}{32 K} \mathbb{E}\left[\left\| x^{K-1} -x^{K} \right\|^2\right]\notag
\\&
+2 \eta \mathbb{E}\Bigg[\max\limits_{x\in \mathcal{C}} \left\{\frac{1}{K}\sum\limits_{k=0}^{K-1}\left\langle\mathbb{E}_k\left[\Delta^k\right]
-\Delta^k, x^k-x\right\rangle \right\}\Bigg] \notag
\\& 
-\mathbb{E}\Bigg[\frac{\gamma}{K} \sum\limits_{k=0}^{K-1}  \left\|w^k-x^{k+1}\right\|^2 
+ \frac{1 - \gamma}{K} \sum\limits_{k=0}^{K-1} \left\|x^{k+1}-x^k\right\|^2\Bigg]\notag
\\&
+2\eta \mathbb{E}\Bigg[\frac{1}{K} \sum\limits_{k=0}^{K-1} \langle F(x^{k-1})-F(x^k), x^{k+1}-x^k \rangle\Bigg]\notag
\\&
+2 \eta \mathbb{E}\Bigg[\frac{1}{K} \sum\limits_{k=0}^{K-1}\left\langle\mathbb{E}_k\left[\Delta^k\right]
-\Delta^k, x^{k+1}-x^k\right\rangle\Bigg]
.
\end{align*}
With Lemma \ref{lem:4} for the second line of the previous estimate and Lemma \ref{lem:1} for the third line, we get
\begin{align}
2\eta 
\mathbb{E}\left[\text{Gap} (\bar x^K) \right]\notag
\leq & \mathbb{E}\Bigg[\max\limits_{x\in \mathcal{C}} \left\{\frac{2 - \gamma}{K}\left\|x^0-x\right\|^2
\right\} \Bigg] \notag
\\&
+\max\limits_{x \in \mathcal{C}} \left\{\frac{2}{K}\|x - x^0\|^2 \right\} + \frac{\gamma(1 - \gamma)}{2K}\sum\limits_{k = 0}^{K - 1}\mathbb{E}\left[\|x^{k+1} - \omega^k\|^2\right] \notag
\\&
+\max\limits_{x \in \mathcal{C}} \left\{\frac{1}{K}\|x - x^0\|^2 \right\} + \frac{\eta^2}{K}\sum\limits_{k = 0}^{K - 1}\mathbb{E}\left[\|\mathbb{E}_k\left[\Delta^k\right] -\Delta^k\|^2\right] \notag
\\& 
-\mathbb{E}\Bigg[\frac{\gamma}{K} \sum\limits_{k=0}^{K-1}  \left\|w^k-x^{k+1}\right\|^2 
+ \frac{1 - \gamma}{K} \sum\limits_{k=0}^{K-1} \left\|x^{k+1}-x^k\right\|^2\Bigg] \notag
\\&+ \frac{1}{32 K} \mathbb{E}\left[\left\| x^{K-1} -x^{K} \right\|^2\right]\notag
\\&
+2\eta \mathbb{E}\Bigg[\frac{1}{K} \sum\limits_{k=0}^{K-1} \langle F(x^{k-1})-F(x^k), x^{k+1}-x^k \rangle\Bigg]\notag
\\&
+2 \eta \mathbb{E}\Bigg[\frac{1}{K} \sum\limits_{k=0}^{K-1}\left\langle\mathbb{E}_k\left[\Delta^k\right]
-\Delta^k, x^{k+1}-x^k\right\rangle\Bigg] \notag
\\
\leq & \frac{4}{K} \mathbb{E}\Bigg[\max\limits_{x\in \mathcal{C}} \left\{\left\|x^0-x\right\|^2
\right\} \Bigg] 
+ \frac{\eta^2}{K}\sum\limits_{k = 0}^{K - 1}\mathbb{E}\left[\|\mathbb{E}_k\left[\Delta^k\right] -\Delta^k\|^2\right]\notag
\\& 
-\mathbb{E}\Bigg[\frac{\gamma}{2K} \sum\limits_{k=0}^{K-1}  \left\|w^k-x^{k+1}\right\|^2 
+ \frac{1 - \gamma}{K} \sum\limits_{k=0}^{K-1} \left\|x^{k+1}-x^k\right\|^2\Bigg]\notag
\\&+ \frac{1}{32 K} \mathbb{E}\left[\left\| x^{K-1} -x^{K} \right\|^2\right]\notag
\\&
+2\eta \mathbb{E}\Bigg[\frac{1}{K} \sum\limits_{k=0}^{K-1} \langle F(x^{k-1})-F(x^k), x^{k+1}-x^k \rangle\Bigg]\notag
\\&
+2 \eta \mathbb{E}\Bigg[\frac{1}{K} \sum\limits_{k=0}^{K-1}\left\langle\mathbb{E}_k\left[\Delta^k\right]
-\Delta^k, x^{k+1}-x^k\right\rangle\Bigg].
\label{then}
\end{align}
According to Young's inequality,
\begin{align}
    \mathbb{E}\big[2 \eta \langle  \mathbb{E}_k\left[\Delta^k\right] -\Delta^k , x^{k+1}-x^k \rangle\big]
    \leq
    4\eta^2\mathbb{E}\left[\|\mathbb{E}_k\left[\Delta^k\right]-\Delta^k\|^2\right]
    +
    \frac{1}{4}\mathbb{E}\left[\|x^{k+1}-x^k\|^2\right],
    \label{tech25}
\end{align}
and
\begin{align}
    \mathbb{E}\big[2 \eta \langle F(x^{k-1})-F(x^k), x^{k+1}&-x^k \rangle \big]
    \\\leq&
    4\eta^2\mathbb{E}\left[\|F(x^{k-1})-F(x^k)\|^2\right]
    +
    \frac{1}{4}\mathbb{E}\left[\|x^{k+1}-x^k\|^2\right].
    \label{tech35}
\end{align}
Combining \eqref{tech25}, \eqref{tech35} with \eqref{then}, we obtain
\begin{align*}
2\eta 
\mathbb{E}\left[\text{Gap} (\bar x^K) \right]\notag
\leq & \frac{4}{K} \mathbb{E}\Bigg[\max\limits_{x\in \mathcal{C}} \left\{\left\|x^0-x\right\|^2
\right\} \Bigg] 
+ \frac{\eta^2}{K}\sum\limits_{k = 0}^{K - 1}\mathbb{E}\left[\|\mathbb{E}_k\left[\Delta^k\right] -\Delta^k\|^2\right]\notag
\\& 
-\mathbb{E}\Bigg[\frac{\gamma}{2K} \sum\limits_{k=0}^{K-1}  \left\|w^k-x^{k+1}\right\|^2 
+ \frac{1 - \gamma}{K} \sum\limits_{k=0}^{K-1} \left\|x^{k+1}-x^k\right\|^2\Bigg] 
\\&+ \frac{1}{32 K} \mathbb{E}\left[\left\| x^{K-1} -x^{K} \right\|^2\right]\notag
+\frac{4\eta^2}{K}\sum\limits_{k=0}^{K-1} \mathbb{E}\left[\|F(x^{k-1})-F(x^k)\|^2\right] 
\\&+ \frac{1}{4K}\sum\limits_{k=0}^{K-1} \mathbb{E}\left[\|x^{k+1}-x^k\|^2\right]\notag+\frac{4\eta^2}{K}\mathbb{E}\sum\limits_{k=0}^{K-1} \left[\|\mathbb{E}_k\left[\Delta^k\right]-\Delta^k\|^2\right] 
\\&
+\frac{1}{4K} \sum\limits_{k=0}^{K-1} \mathbb{E}\left[\|x^{k+1}-x^k\|^2\right]
    \\
\leq & \frac{4}{K} \mathbb{E}\Bigg[\max\limits_{x\in \mathcal{C}} \left\{\left\|x^0-x\right\|^2
\right\} \Bigg] 
+ \frac{5\eta^2}{K}\sum\limits_{k = 0}^{K - 1}\mathbb{E}\left[\|\mathbb{E}_k\left[\Delta^k\right] -\Delta^k\|^2\right]\notag
\\& 
-\mathbb{E}\Bigg[\frac{\gamma}{2K} \sum\limits_{k=0}^{K-1}  \left\|w^k-x^{k+1}\right\|^2 
+ \frac{1/2 - \gamma}{K} \sum\limits_{k=0}^{K-1} \left\|x^{k+1}-x^k\right\|^2\Bigg]
\\&+ \frac{1}{32 K} \mathbb{E}\left[\left\| x^{K-1} -x^{K} \right\|^2\right]
+\frac{4\eta^2}{K}\sum\limits_{k=0}^{K-1} \mathbb{E}\left[\|F(x^{k-1})-F(x^k)\|^2\right].
\end{align*}
$L$ - Lipschitzness of $F$ (Assumption \ref{ass}) and the choice of $\gamma \leq \frac{1}{L}$ give
\begin{align*}
2\eta 
\mathbb{E}\left[\text{Gap} (\bar x^K) \right]\notag
\leq & \frac{4}{K} \mathbb{E}\Bigg[\max\limits_{x\in \mathcal{C}} \left\{\left\|x^0-x\right\|^2
\right\} \Bigg] 
+ \frac{5\eta^2}{K}\sum\limits_{k = 0}^{K - 1}\mathbb{E}\left[\|\mathbb{E}_k\left[\Delta^k\right] -\Delta^k\|^2\right]\notag
\\& 
-\mathbb{E}\Bigg[\frac{\gamma}{2K} \sum\limits_{k=0}^{K-1}  \left\|w^k-x^{k+1}\right\|^2 
+ \frac{1/2 - \gamma}{K} \sum\limits_{k=0}^{K-1} \left\|x^{k+1}-x^k\right\|^2\Bigg] 
\\&+ \frac{1}{32 K} \mathbb{E}\left[\left\| x^{K-1} -x^{K} \right\|^2\right]+\frac{4\eta^2 L^2 }{K}\sum\limits_{k=0}^{K-1} \mathbb{E}\left[\|x^{k-1}-x^k\|^2\right]
\\
\leq & \frac{4}{K} \mathbb{E}\Bigg[\max\limits_{x\in \mathcal{C}} \left\{\left\|x^0-x\right\|^2
\right\} \Bigg] 
+ \frac{5\eta^2}{K}\sum\limits_{k = 0}^{K - 1}\mathbb{E}\left[\|\mathbb{E}_k\left[\Delta^k\right] -\Delta^k\|^2\right]\notag
\\& 
-\mathbb{E}\Bigg[\frac{\gamma}{2K} \sum\limits_{k=0}^{K-1}  \left\|w^k-x^{k+1}\right\|^2 
+ \frac{1/2 - \gamma}{K} \sum\limits_{k=0}^{K-1} \left\|x^{k+1}-x^k\right\|^2\Bigg]
\\&+ \frac{1}{32 K} \mathbb{E}\left[\left\| x^{K-1} -x^{K} \right\|^2\right]+\frac{1}{4K}\sum\limits_{k=0}^{K-1} \mathbb{E}\left[\|x^{k-1}-x^k\|^2\right]
\\
\leq & \frac{4}{K} \mathbb{E}\Bigg[\max\limits_{x\in \mathcal{C}} \left\{\left\|x^0-x\right\|^2
\right\} \Bigg] 
+ \frac{5\eta^2}{K}\sum\limits_{k = 0}^{K - 1}\mathbb{E}\left[\|\mathbb{E}_k\left[\Delta^k\right] -\Delta^k\|^2\right]\notag
\\& 
-\mathbb{E}\Bigg[\frac{\gamma}{2K} \sum\limits_{k=0}^{K-1}  \left\|w^k-x^{k+1}\right\|^2 
+ \frac{1/2 - \gamma}{K} \sum\limits_{k=0}^{K-1} \left\|x^{k+1}-x^k\right\|^2\Bigg]
\\&
+\frac{1}{4K}\sum\limits_{k=0}^{K-1} \mathbb{E}\left[\|x^{k+1}-x^k\|^2\right]
\\
= & \frac{4}{K} \mathbb{E}\Bigg[\max\limits_{x\in \mathcal{C}} \left\{\left\|x^0-x\right\|^2
\right\} \Bigg] 
+ \frac{5\eta^2}{K}\sum\limits_{k = 0}^{K - 1}\mathbb{E}\left[\|\mathbb{E}_k\left[\Delta^k\right] -\Delta^k\|^2\right]\notag
\\& 
-\mathbb{E}\Bigg[\frac{\gamma}{2K} \sum\limits_{k=0}^{K-1}  \left\|w^k-x^{k+1}\right\|^2 
+ \frac{1/4 - \gamma}{K} \sum\limits_{k=0}^{K-1} \left\|x^{k+1}-x^k\right\|^2\Bigg].
\end{align*}
Here we also used the initialization of Algorithm \ref{alg:sarah} with $x^{-1}= x^0$. Applying Lemma \ref{lem:3}, we obtain
\begin{align*}
2\eta 
\mathbb{E}\left[\text{Gap} (\bar x^K) \right]\notag
\leq & \frac{4}{K} \mathbb{E}\Bigg[\max\limits_{x\in \mathcal{C}} \left\{\left\|x^0-x\right\|^2
\right\} \Bigg]
\\&
+ \frac{10\eta^2 {\overline{{L}}}^{2} }{b K}\sum\limits_{k = 0}^{K - 1} \left(\|x^{k}-w^{k-1}\|^{2}+\left\|x^{k}-x^{k-1}\right\|^{2}\right)
\\& 
-\mathbb{E}\Bigg[\frac{\gamma}{2K} \sum\limits_{k=0}^{K-1}  \left\|w^k-x^{k+1}\right\|^2 
+ \frac{1/4 - \gamma}{K} \sum\limits_{k=0}^{K-1} \left\|x^{k+1}-x^k\right\|^2\Bigg]
\\
\leq & \frac{4}{K} \mathbb{E}\Bigg[\max\limits_{x\in \mathcal{C}} \left\{\left\|x^0-x\right\|^2
\right\} \Bigg]
\\&
+ \frac{10\eta^2 {\overline{{L}}}^{2} }{b K}\sum\limits_{k = 0}^{K - 1} \left(\|x^{k+1}-w^{k}\|^{2}+\left\|x^{k}-x^{k+1}\right\|^{2}\right)
\\& 
-\mathbb{E}\Bigg[\frac{\gamma}{2K} \sum\limits_{k=0}^{K-1}  \left\|w^k-x^{k+1}\right\|^2 
+ \frac{1/4 - \gamma}{K} \sum\limits_{k=0}^{K-1} \left\|x^{k+1}-x^k\right\|^2\Bigg]
\\
\leq & \frac{4}{K} \mathbb{E}\Bigg[\max\limits_{x\in \mathcal{C}} \left\{\left\|x^0-x\right\|^2
\right\} \Bigg]
\\& 
-\mathbb{E}\Bigg[ \left( \frac{\gamma}{2} - \frac{10\eta^2 {\overline{{L}}}^{2} }{b}\right)\frac{1}{K} \sum\limits_{k=0}^{K-1}  \left\|w^k-x^{k+1}\right\|^2 
\\&
+ \left(\frac{1}{4} - \gamma -  \frac{10\eta^2 {\overline{{L}}}^{2} }{b}\right)\frac{1}{K} \sum\limits_{k=0}^{K-1} \left\|x^{k+1}-x^k\right\|^2\Bigg].
\end{align*}
Here we again used the initialization of Algorithm \ref{alg:sarah} with $w^{-1} = x^{-1}= x^0$. The choice of $\eta \leq \frac{\sqrt{\gamma b}}{8 \bar L}$ and $ 0 < \gamma \leq \frac{1}{16}$ gives
\begin{align*}
2\eta 
\mathbb{E}\left[\text{Gap} (\bar x^K) \right]\notag
\leq & \frac{4}{K} \mathbb{E}\Bigg[\max\limits_{x\in \mathcal{C}} \left\{\left\|x^0-x\right\|^2
\right\} \Bigg]
\\& 
-\mathbb{E}\Bigg[ \left(\frac{1}{12} - \gamma \right)\frac{1}{K} \sum\limits_{k=0}^{K-1} \left\|x^{k+1}-x^k\right\|^2\Bigg]
\\
\leq & \frac{4}{K} \max\limits_{x\in \mathcal{C}} \left\{\left\|x^0-x\right\|^2 \right\}.
\end{align*}
And we have 
\begin{align*} 
\mathbb{E}\left[\text{Gap} (\bar x^K) \right]\notag
\leq & \frac{2}{\eta K} \max\limits_{x\in \mathcal{C}} \left\{\left\|x^0-x\right\|^2 \right\}.
\end{align*}
Substitution of $\eta$ from the conditions of the theorem and $\gamma = p$ finishes the proof.

\end{document}